\documentclass[12pt]{amsart}

\sloppy

\usepackage{amssymb}
\usepackage{bbm}
\usepackage{amsmath}
\usepackage{amsthm}
\usepackage{datetime}
\usepackage{enumitem}
\usepackage{amsfonts}
\usepackage{hyperref}
\usepackage{cleveref}
\usepackage{comment}
\usepackage{xcolor}
\usepackage{cancel} 

\usepackage{graphicx}

\usepackage{layout}

\makeatletter
\@namedef{subjclassname@2020}{%
  \textup{2020} Mathematics Subject Classification}
\makeatother

\usepackage[T1]{fontenc}

\newtheorem{thm}{Theorem}[section]

\newtheorem{lem}[thm]{Lemma}
\newtheorem{prop}[thm]{Proposition}

\newtheorem{cor}[thm]{Corollary}

\newtheorem*{thm*}{Theorem}

\theoremstyle{definition}
\newtheorem{axiom}{Axiom}
\newtheorem{axiomsc}[axiom]{Axiom-Scheme}
\newtheorem*{axiomsc*}{Axiom-Scheme}
\newtheorem{defn}[thm]{Definition}
\newtheorem{term}[thm]{Terminology}
\newtheorem{rem}[thm]{Remark}
\newtheorem{nota}[thm]{Notation}
\newtheorem{conv}[thm]{Convention}

\DeclareMathOperator{\fib}{Fib}
\DeclareMathOperator{\F}{F}

\DeclareMathOperator{\theory}{Th}
\DeclareMathOperator{\tp}{tp}

\newcommand{\NN}{\mathbb{N}}
\newcommand{\ZZ}{\mathbb{Z}}
\newcommand{\ZphiMinus}{\langle \mathbbm{Z}, +, f,0\rangle}
\newcommand{\zphi}{\mathcal{Z}_\varphi^<}
\newcommand{\zphiMinus}{\mathcal{Z}_\varphi}
\newcommand{\Zphi}{\langle \mathbbm{Z},<, +,f,0\rangle}

\newcommand{\fibfloor}[1]{\lfloor #1\rfloor_{\fib}}
\newcommand{\ZphiF}{\langle \mathbbm{Z},<, +,f,\fibfloor{-} ,0\rangle}

\newcommand{\fp}[1]{\left[\varphi #1\right]}

\linespread{1.1}
%
\textwidth=15cm
\textheight=20.7cm
\hoffset = -1.2cm
\baselineskip=2cm




\begin{document}


\title[Fibonacci Numbers and Beatty Expansions of Presburger Arithmetic]{Fibonacci Numbers and Model-Complete Axiomatization of Presburger Arithmetic Expanded with a Beatty Sequence}

\author[M. Khani]{Mohsen Khani}
\address{Department of Mathematical Sciences\\ Isfahan University of Technology\\ Isfahan \\84156-83111, Iran}
\email{mohsen.khani@iut.ac.ir}
\author[A. N. Valizadeh]{Ali N. Valizadeh}
\address{Department of Mathematics and Statistics\\ University of Isfahan\\Isfahan\\
	81746-73441, Iran + 
	School of Mathematics\\ Institute for Research in Fundamental Sciences (IPM)\\
	P.O. Box 19395-5746, 
	Tehran, Iran   }
\email{a.valizadeh@mcs.ui.ac.ir, valizadeh.ali@ipm.ir}
\author[A. Zarei]{Afshin Zarei}
\address{School of Mathematics\\ Institute for Research in Fundamental Sciences 
	(IPM)\\ P.O. Box: 19395-5746, Tehran\\ Iran.}
\email{a.zarei@ipm.ir}
\thanks{The second author was in part supported by a grant from IPM (No. 1400030021). \\The third author was supported by a grant from IPM}

	\date{\today, \currenttime}

	\begin{abstract}
		
		We introduce a recursive theory that completely axiomatizes the structure $\Zphi$ where $f$ is the function that maps each $x$ to the integer part of $\varphi x $, with $ \varphi$ the golden ratio. We prove that our axiomatization is model-complete in a language expanded with a function to which we refer as the \textit{Fibonacci floor function}. 

	\end{abstract}

\subjclass[2020]{Primary 03B25, 03C10 Secondary 11U09, 11U05 \\ \hspace*{.35cm}2012 \textit{ACM Classification.} 300 - Theory of computation - Logic - Constructive mathematics}

\keywords{Presburger Arithmetic, Beatty sequence, Fibonacci sequence, Kronecker's lemma, Decidability, Model-completeness, Integers, Constructive Mathematics}

\maketitle

\section{Introduction}

Let $\varphi$ be the golden ratio which, as a quadratic number, is the smallest root of the polynomial $x^2+x+1=0$. Also, let  $f:\mathbbm{Z}\to \mathbbm{Z}$ be the function that maps 
each integer $n$ to the integer part of $\varphi n$, namely 
$\lfloor \varphi n\rfloor $. In this paper we study the structure  $\zphi:=\Zphi$ which may be called a \textit{Beatty expansion} of Presburger arithmetic as the range of $f$ is called the Beatty sequence with modulus $\varphi$. As we proceed, we will see that 
the Fibonacci sequence plays a significant role in determining the model theoretic properties of this structure. 
The Fibonacci sequence is defined recursively as:
\[ \begin{cases}
	F_0=1, F_1=1, \\F_{n+2}=F_{n}+F_{n+1}.
\end{cases} \]
Recall that this sequence is definable in the language of Peano arithmetic which contains addition, multiplication, and order.
We observe that the set of Fibonacci numbers is definable in the language $\mathcal{L}_0=\{<, +, f, 0\}$. Based on that, we introduce a new function $\fibfloor{-}:\mathbbm{Z}\to \mathbbm{Z}$, which we call the \textit{Fibonacci floor function}, that maps each $n$ to the largest Fibonacci number of an even index, and smaller than $n$. As we proceed, we present the required axioms for this function, which, together with the axioms concerning a certain set of extrema, are able to capture the model theoretic content of the structure $\zphi$. This leads to our main theorem (\Cref{thm-model-completeness}):

\begin{thm*}\label{main}
	The structure
	$\ZphiF$ is axiomatizable by a 
	model-complete theory.
\end{thm*}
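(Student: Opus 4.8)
Throughout, write $T$ for the recursive theory under discussion, which we take to be the definitional expansion of the (recursive, complete) axiomatisation of $\zphi$ obtained in the previous sections by the $\mathcal{L}_0$-definition of $\fibfloor{-}$; thus $T$ automatically axiomatises $\ZphiF$, and the whole content of the theorem is that $T$ is model-complete. The plan is to verify \emph{Robinson's test}: for models $M\subseteq N$ of $T$, a finite tuple $\bar a$ from $M$, and an element $b$ of $N$, every quantifier-free $\mathcal{L}$-formula satisfied by $(b,\bar a)$ in $N$ is satisfied by $(b',\bar a)$ in $M$ for some $b'\in M$. Equivalently, every $\mathcal{L}$-formula is $T$-equivalent to an existential one (and, passing to negations, to a universal one), which is exactly model-completeness.

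The first ingredient is syntactic. Building on the Fibonacci and Fibonacci-floor identities established above — in particular $f(f(x))=f(x)+x-1$ for $x\neq 0$, the two-valued cocycle $f(x+y)-f(x)-f(y)\in\{0,1\}$ whose carry bit is Presburger-like because it is equivalent to an $f$-equation of the shape $f(u)=f(u-v)+f(v)$ (comparison of the fractional quantities $\varphi x-f(x)$), together with the identities governing $\fibfloor{\fibfloor{x}}$, $f(\fibfloor{x})$ and translates — one reduces every $\mathcal{L}$-term to a normal form in which the working variable occurs only inside $x$, $f(x)$ and $\fibfloor{x}$, with all further nestings and translations absorbed into finitely many Presburger side-conditions. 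Combined with Presburger quantifier elimination for the $\mathcal{L}_0$-reduct, this reduces the theorem to eliminating a single existential quantifier from a Boolean combination of linear (in)equalities and congruences in $x$, $f(x)$, $\fibfloor{x}$ and parameters, together with the atoms asserting that a term is an (even-indexed) Fibonacci number or lies in the Beatty set.

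The model-theoretic core is then an analysis of the quantifier-free $\mathcal{L}$-type $p$ of $b$ over $\bar a$ in $N$. The claim to isolate is that $p$ is pinned down by a \emph{bounded} amount of data: the Presburger cut of $b$ over $\bar a$ together with its residues modulo the finitely many moduli occurring in $p$; the corresponding data for $f(b)$ and $\fibfloor{b}$, which the normal-form step shows is largely forced by that of $b$; and a finite \emph{Zeckendorf profile} of $b$ relative to $\bar a$, recording which even-indexed Fibonacci number lies just below each of $b$, $\fibfloor{b}$, $b-a_i$, and the first few Ostrowski digits. The point of having $\fibfloor{-}$ in the language is precisely that this profile is visible to quantifier-free formulas: from $\fibfloor{-}$ and $f$ one recovers the whole tower of convergent denominators $F_n$ and can run the greedy (Ostrowski) reduction, and the recursive axiom-scheme concerning the distinguished set of extrema dictates how $f$ evolves along that reduction. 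Granting this, Robinson's test becomes a finite case analysis on the location of $b$: either $b$, $f(b)$ and $\fibfloor{b}$ all fall in ``long'' intervals between $\bar a$-definable points, in which case the discreteness and density axioms of the Presburger part — with the fact that models of $T$ carry the full tower of even-indexed Fibonacci numbers cofinally and correctly arranged — manufacture a solution already in $M$ out of the existence of one in $N$; or $b$ is infinitesimally close to an $\bar a$-definable point or Fibonacci number, whence $b$ is $\mathcal{L}$-definable over $\bar a$ by a closed term and already lies in $M$. In every branch the witness is exhibited by an existential $\mathcal{L}$-formula over $\bar a$, completing the test.

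The step I expect to be the main obstacle is the middle one: showing that the Zeckendorf profile is genuinely finite, and that the axioms of $T$ are strong enough to force the profile of a solution found in $N$ to be matched in $M$. The difficulty is that $f$ communicates with $\fibfloor{-}$ through the carries of Zeckendorf addition, so that tracking $f(b)$ along the greedy reduction of $b$ requires knowing, \emph{uniformly in $n$}, the extremal configuration of $\varphi x-f(x)$ in a window around $F_{2n}$ and its neighbours; calibrating the recursive axiom-scheme about the extrema so that it captures exactly this — enough that $T$ is complete and model-complete, yet not so much that $\zphi$ (and its definitional expansion) fails to model $T$ — is the delicate point, and it is where Kronecker's lemma enters, to certify both that the assertions of the axioms hold in $\zphi$ and that nothing beyond them is needed. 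Once the profile is pinned down, transferring witnesses between $M$ and $N$ is routine Presburger bookkeeping.
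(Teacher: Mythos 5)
Your framework is the right one (Robinson's test, reducing to a single existential quantifier over a bounded numerical-plus-decimal cut, and the role of $\fibfloor{-}$ in making the Fibonacci tower visible to quantifier-free formulas), and it coincides in spirit with the paper's back-and-forth argument between two saturated extensions of a common submodel. You also correctly locate where the difficulty lies — "the axioms concerning the extrema" and how $f$ evolves along the greedy reduction. But the proposal stops precisely at the point where the real work begins, and you say so yourself ("the step I expect to be the main obstacle is the middle one"). The missing ingredient is the paper's Main Lemma (\Cref{lma-Main}): for $M_1\subseteq M_2\models T$ and $a,b\in M_1$, the element of $(a,b)$ realizing the minimum (resp.\ maximum) of the decimal parts $\fp{w}$ is the \emph{same} in $M_1$ and $M_2$. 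This lemma, proved by comparing $\fp{F(c-a)}$ and $\fp{G(b-c)}$ against the candidate minimum $c$, is the whole reason $\fibfloor{-}$ is added to the language, and it, together with \Cref{lma-min-minus} and \Cref{crl-main-obstacle}, is what "manufactures a solution already in $M$ out of the existence of one in $N$." Without proving it, Robinson's test remains a plan, not a proof.

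Two further points where your sketch diverges from what actually happens. First, your claim that the quantifier-free type of $b$ over $\bar a$ is "pinned down by a bounded amount of data" (a finite Zeckendorf/Ostrowski profile) is not used by, and is in tension with, the paper: the authors explicitly observe that the full numerical cut of $x$ over $M$ does \emph{not} suffice to determine the decimal cut, which is why saturation is invoked and why the proof of \Cref{thm-model-completeness} has a genuinely nontrivial extension step, closing $M\langle x\rangle$ under the extrema of \Cref{ax-min-decimal-greater-than-phi-c} and matching them term-by-term (Cases 1, 1.1, 1.2, 2, 2.2.1, 2.2.2 there). Second, your dichotomy "long intervals vs.\ infinitesimally close to an $\bar a$-definable point, hence $b$ is a closed term over $\bar a$" is not the case split the argument needs, and the second branch is false as stated: an element can be sandwiched between two $\bar a$-definable points at distance $2$ without being a closed term over $\bar a$. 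The correct split in the paper is between $x$ realizing a bounded numerical cut over $M$ (handled via \Cref{lma-Main}) and $x$ infinitely large, where \Cref{crl-Kronecker} and saturation take over.
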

\noindent 

This theorem provides also a model-theoretic proof for the decidability of the structure $\zphi$.
A concise history of the subject is provided below.

In a series of works, \cite{Hieronymi-ExpansionByTwoDiscrete,HT,Hieronymi-ScalarMultiplication},  expansions of the ordered additive structure $\langle \mathbbm{R}, \mathbbm{Z}, <, +, 0\rangle$ with various restricted forms of multiplication were explored: By fixing an irrational number $\alpha$, the language is either expanded by a unary function $\lambda_\alpha: \mathbbm{R} \to \mathbbm{R}$, defined as $\lambda_\alpha (x) = \alpha x$, or else it is expanded by predicates of the form $\alpha \mathbbm{Z}$. 

On the one hand, it was shown that the structure $\mathcal{S}_\alpha = \langle \mathbbm{R},\mathbbm{Z}, <, +, \lambda_\alpha\rangle$ is decidable if and only if $\alpha$ is a quadratic number (\cite{Hieronymi-ScalarMultiplication}). On the other hand, it was proved that the structure  $\mathcal{R}_{\alpha} = \langle \mathbbm{R}, \mathbbm{Z}, \alpha\mathbbm{Z}, <, + \rangle$  is decidable whenever $\alpha$ is quadratic (\cite{Hieronymi-ExpansionByTwoDiscrete}). Furthermore, it was shown that having only two scalar multiplications is sufficient for the structure $\langle \mathbbm{R}, \mathbbm{Z},\alpha \mathbbm{Z}, \beta \mathbbm{Z}, <,+ \rangle$ to be undecidable whenever $1$, $\alpha$ and $\beta$ are linearly independent over $\mathbbm{Q}$ (\cite{HT}).

In \cite{kvz} we developed a deep investigation of the unordered integer part of the structure $\mathcal{R}_\alpha$, where we introduced a model-complete axiomatization leading to the result that the structure $\langle \mathbbm{Z}, +, f_\alpha, 0\rangle$ is decidable if and only if $\alpha $ is a computable real number; recall that the function $f_\alpha$ maps an integer $x$ to $\lfloor\alpha x\rfloor$, and it is called the Beatty sequence with modulus $\alpha$.



In the current work we study the structure $\zphi$ that consists of the integer part of the structure $\mathcal{R}_\alpha$ for the case of $\alpha=\varphi$. It is astonishing that having the usual order of integers along with the function $f_\alpha$ leads to an unexpected degree of complication. In fact, extracting the model-theoretic content and finding the right axiomatization of this structure turned into a difficult challenge for a quadratic number like $\varphi$, and this is even more complicated for an arbitrary computable real number. We denote by $\zphiMinus$ the reduct of $\zphi$ that is obtained by forgetting the usual order of integers. This structure was studied in \cite{kz}, where a quantifier elimination was proved. Our works here, also leads to proving a constructive version of Kronecker's approximation lemma which provide a significant improvement of the axiomatization of the structure  $\zphiMinus$; this will appear in \Cref{sec-without-num-order}.
\par  
%

We introduce our axiomatization for the structure $\zphi$ in \Cref{sec-restoring-order}. As we proceed, the importance of Fibonacci numbers and their connection to certain extremum points will become clearer.
We will conclude with some final observations and remarks in the last section. 

\vspace*{.5cm}
\textbf{A‌ remark on terminology.} To make the exposition clearer, we will frequently use the term ``numerical order'' in contrast to the term ``decimal order'' of integers. While the former simply refers to the usual ordering of integers, the meaning of the latter will be discussed and introduced in \Cref{sec-without-num-order}.

\begin{conv}
 Unless stated otherwise, by definable we mean $\varnothing$-definable.
\end{conv}

\vspace*{.7cm}
\section {In the absence of numerical order, the structure $\zphiMinus$}\label{sec-without-num-order}
\vspace*{.3cm}

As addressed above, an axiomatization of the structure 
$\zphiMinus:=\ZphiMinus$ together with a quantifier elimination were introduced in \cite{kz}. In this brief section we show that the set of basic axioms for the function $f$ are far more strong than what was expected. In fact, a constructive version of Kronecker's approximation lemma turns out to be an implication of these basic axioms.


\begin{axiomsc}[Basic properties]\hfill 
	\label{axiom-properties-of-f}
	\begin{enumerate}
		\item [(A1.1)] The axioms of the structure $\langle\ZZ,+,-,0\rangle$ as a $\ZZ$-group.
		\item [(A1.2)] For all $x$ and $y$, either $f(x+y)=f(x)+f(y)$ or $f(x+y)=f(x)+f(y)+1$.
		\item [(A1.3)] For all $x$, the value of $f(-x)$ is equal to $-f(x)-1$.
		\item [(A1.4)] For all $x$, we have that  $f^2(x)=f(x)+x-1 $ and $f(f(x)+x)=2f(x)+x$.
		\item [(A1.5)] For all $x$, there is $y$ such that either we have $x=f(y)$ or we have $x = f(y)+y$ (the disjunction is exclusive).
		
	\end{enumerate}
\end{axiomsc}
Axiom (A1.2) might be called the ``approximate linearity with a uniform bound'' of the function $f$. 
Axiom (A1.5) shows that the set of integers is partitioned by the ranges of the function $f(x)$ and the term $x+f(x)$. 
\begin{nota}\label{not-decimal-order}\hfill
	(1) Let $x<^*y$ denote the formula $x\neq y \wedge f(y-x)=f(y)-f(x)$. This relation is called the \textit{decimal order} over integers. (2) For convenience, we let $\bar{f}(x)$ denote the term $x+f(x)$.  
	
\end{nota}

	%
	%

\begin{lem}\label{lem-LO}
	Assuming parts (1)-(3) of \Cref{axiom-properties-of-f}, 
	\begin{itemize}
		\item[(1)] The relation $<^*$ defines a linear order.
		\item[(2)] For all nonzero elements $x, y$ and $z$, the inequality $x<^*y$ implies $x+z<^*y+z$ whenever $z<^*-y$ or $-x<^*z$.
	\end{itemize}
	
\end{lem}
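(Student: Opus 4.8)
**Proof plan for Lemma (the $<^*$-order lemma).**

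The plan is to work entirely from axioms (A1.1)–(A1.3), treating $x <^* y$ as the abbreviation $x \neq y \wedge f(y-x) = f(y) - f(x)$, and to reduce every claim to the "defect" quantity $\delta(a,b) := f(a+b) - f(a) - f(b) \in \{0,1\}$ guaranteed by (A1.2). I note first that $x <^* y$ is equivalent to $\delta(-x, y) = 0$ together with $x \neq y$: since $y - x = y + (-x)$, we have $f(y-x) = f(y) + f(-x) + \delta(-x,y)$, and by (A1.3) $f(-x) = -f(x) - 1$, so $f(y-x) = f(y) - f(x) - 1 + \delta(-x,y)$; thus $f(y-x) = f(y) - f(x)$ iff $\delta(-x,y) = 1$. (I will double-check the bookkeeping here, as the off-by-one from (A1.3) is exactly where sign errors creep in; possibly the cleaner invariant is $x <^* y \iff \delta(x, -y) = 0$ or similar, and I will pick whichever makes the arithmetic below cleanest.) The point is that $<^*$ is governed by a $\{0,1\}$-valued cocycle-like function, and all three assertions become statements about this defect.

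For part (1), I would verify the three linear-order axioms in turn. \emph{Irreflexivity} is immediate from the $x \neq y$ conjunct. \emph{Totality}: given $x \neq y$, I must show exactly one of $\delta(-x,y)$, $\delta(-y,x)$ equals the value forcing the order (using $f(x-y) = f(-(y-x))$ and (A1.3) to relate $f(x-y)$ and $f(y-x)$: indeed $f(x-y) + f(y-x) = -1$ by (A1.3), so $f(y-x) = f(y)-f(x)$ and $f(x-y) = f(x)-f(y)$ cannot both hold, and exactly one does — this gives both totality and antisymmetry at once). \emph{Transitivity} is the substantive case: assuming $x <^* y$ and $y <^* z$, i.e. $f(y-x) = f(y)-f(x)$ and $f(z-y) = f(z)-f(y)$, I want $f(z-x) = f(z) - f(x)$; writing $z - x = (z-y) + (y-x)$ and applying (A1.2), $f(z-x) = f(z-y) + f(y-x) + \delta$ with $\delta \in \{0,1\}$, which gives $f(z-x) = f(z)-f(x) + \delta$; I then need to rule out $\delta = 1$ using an additivity/antisymmetry argument — probably by also expanding $f(z-x)$ the "other way" or by invoking totality against $x <^* z$'s negation and deriving a contradiction. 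I expect transitivity to be the main obstacle, precisely because $<^*$ is not literally preserved by addition (that is the content of part (2), with its side conditions), so the naive "add the two defects" computation does not close by itself and needs a genuine case analysis on the $\{0,1\}$ values.

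For part (2), given $x <^* y$ and a side hypothesis ($z <^* -y$ or $-x <^* z$, with $x,y,z \neq 0$), I want $x + z <^* y + z$, i.e. $f((y+z)-(x+z)) = f(y+z) - f(x+z)$. The left side is just $f(y-x)$, so the content is $f(y+z) - f(x+z) = f(y-x) = f(y) - f(x)$, i.e. $\delta(y,z) = \delta(x,z)$ — the defect of adding $z$ is the same at $x$ and at $y$. This is where the side conditions earn their keep: $z <^* -y$ says $\delta$ behaves additively on the pair $(z, -y)$, and $-x <^* z$ says it behaves additively on $(-x, z)$; I would chain these with (A1.2)/(A1.3) to pin down $\delta(x,z)$ and $\delta(y,z)$ and show they coincide. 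Concretely I would expand $f(y+z)$ via $y + z = -((-y) + (-z))$ or via $y + z = y + z$ directly and substitute the hypothesis $f(-y + (-z))$-relation (rewritten from $z <^* -y$ using \Cref{not-decimal-order} and (A1.3)), and similarly handle $f(x+z)$; the two side conditions are set up to be the two cases that make one of these chains go through. The risk here is again purely in the sign/offset bookkeeping from (A1.3), not in any conceptual difficulty, so I would organize the computation as a short lemma "$a <^* b \implies f(a+b) = \ldots$" stated once and reused, rather than re-deriving it in each branch.
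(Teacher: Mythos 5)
Your plan is correct and follows essentially the same route as the paper: antisymmetry and totality both fall out of the identity $f(x-y)+f(y-x)=-1$ from (A1.3) together with the $\{0,1\}$ bound on the defect from (A1.2); transitivity is settled by expanding $f(z-x)$ both as $f((z-y)+(y-x))$ (giving defect $\in\{0,1\}$) and as $f(z+(-x))$ directly (giving defect $\in\{-1,0\}$), which is your "expand the other way" option; and part (2) reduces to $\delta(x,z)=\delta(y,z)$, which the paper establishes exactly as you sketch, first passing from $x<^*y$ to $-y<^*-x$ via (A1.3) and then using transitivity to carry the side condition from one of the pairs to the other. The sign-bookkeeping you flag does need care — your opening claim should read $\delta(-x,y)=1$, as your own computation shows — but that is a notational slip, not a gap in the argument.
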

\begin{proof}
	(1) Irreflexivity is obvious. Suppose that $x<^*y$, we show that $y<^*x$ cannot hold. By (A1.3), we have that $f(x-y)=f(-(y-x))=f(y-x)-1$. Using $x<^*y$, we have that $f(x-y)=f(y)-f(x)-1$ which shows that $y\not<^* x$. To show that $<^*$ is transitive, suppose that $x<^*y$ and $y<^*z$. Using (A1.2), note that $f(z-x)=f(z-y+y-x)=f(z-y)+f(y-x)+i = f(z)-f(x)+i$ where $i$ is either $0$ or $1$. Using (A1.2) and (A1.3), one can easily show that $f(z-x)=f(z)-f(x)+j$ for some $j\in\{-1,0\}$. Hence, $i $ remains to be zero, that is, $f(z-x)=f(z)-f(x)$ which implies that $x<^*z$.

	Now, suppose that $x\neq y$ and $x\not<^* y$. By (A1.2) and (A1.3) we have that $f(y-x)=f(y)-f(x)-1$. By (A1.3), we have $f(x-y)=f(-(y-x))=-f(y-x)-1 = f(x)-f(y)+1-1=f(x)-f(y)$ which shows that $y<^*x$.
	
	(2) Suppose $x<^* y$ and note that (A.1.3) directly implies $-y<^*-x$. If $z<^*-y$, using transitivity of $<^*$, we have that $z<^*-x$ also. By \Cref{not-decimal-order} and (A1.3), it means that 
	\begin{align}\label{eq-f-x-z}
		f(x+z)=f(x)+f(z) \qquad \text{ and }\qquad  f(y+z)=f(y)+f(z).
	\end{align}
	If $x+z\not<^*y+z$, we can use \Cref{not-decimal-order}, (A1.2), and (A1.3) to conclude that $f((y+z)-(x+z))=f(y+z)-f(x+z)-1$. Using Equations \eqref{eq-f-x-z}, the latter is equal to $f(y)-f(x)-1$. On the other hand, $f((y+z)-(x+z))=f(y-x)$ which, by our assumption of $x<^*y$ and \Cref{not-decimal-order}, is equal to $f(y)-f(x)$, which is a contradiction.
	
	In the case that $-x<^*z$, we have $f(x+z)=f(x)+f(z)+1$ and $f(y+z)=f(y)+f(z)+1$ instead of Equations \eqref{eq-f-x-z}, which can be used in a similar argument as above to show that $x+z<^*y+z$.
	
\end{proof}

\begin{rem}
	Despite the cases treated in part (2) of \Cref{lem-LO}, the relation $ <^*$ is not fully compatible with addition and subtraction. For example, we cannot conclude $x+z<^*y+z$ from $x<^*y$ in cases where $-y<^*z<^*-x$. Also, $0<^*x$ does not imply $-x<^*0$. And, for any nonzero $x$, we have $0<^*x$. Although it seems to be quite an odd situation, it is enough for $<^*$ to simply be an order without having a strong bond to the operations of addition and subtraction. Nevertheless, somewhat desirable properties hold for $<^*$. For example, using (A1.3), for all nonzero elements $x$ and $y$, we can conclude $-y<^*-x$ from $x<^*y$.
\end{rem}

\begin{thm}\label{thm-DLO}
	Assuming parts (1)-(4) of \Cref{axiom-properties-of-f}, the relation $<^*$ defines a dense liner order.
\end{thm}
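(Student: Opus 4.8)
The plan is to prove that $<^*$ is dense by exhibiting, for any two elements $x <^* y$, an element strictly between them in the decimal order. The natural candidate comes from axiom (A1.4), which relates $f$ to its square and thus encodes a kind of "golden ratio scaling" $x \mapsto f(x) = \lfloor \varphi x \rfloor$. Intuitively, if $x <^* y$ then $y - x$ is a nonzero element with $f(y-x) = f(y) - f(x)$, and I should look for an element of the form $f(z)$ or $\bar f(z) = z + f(z)$ lying strictly between $x$ and $y$ in the $<^*$-order, using the decomposition of $\ZZ$ provided by (A1.5).

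**First I would** reduce to the case $x = 0$. By \Cref{lem-LO}(1) the relation $<^*$ is a linear order, and the final remark records that $0 <^* w$ for every nonzero $w$; combined with the partial translation-invariance of \Cref{lem-LO}(2), it should suffice to show: for every $w$ with $0 <^* w$ (equivalently, every nonzero $w$), there is $v$ with $0 <^* v <^* w$. So the whole problem collapses to finding, inside the $<^*$-interval $(0, w)$, a witness. **Next I would** use (A1.4): the identities $f^2(x) = f(x) + x - 1$ and $f(\bar f(x)) = 2f(x) + x = f(x) + \bar f(x)$ say precisely that $f(x) <^* x$ is false while a controlled shift is available — more carefully, I would compute $f(\bar f(x)) - f(f(x))$, $f(x) - f(f(x))$, etc., to locate where $f(x)$ and $\bar f(x)$ sit relative to $x$ in the decimal order, and then iterate $f$ (or $\bar f$) starting from $w$ to produce a strictly decreasing $<^*$-chain landing strictly between $0$ and $w$. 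The key computational identity to extract is that $f$ applied to a $<^*$-positive element yields something strictly $<^*$-smaller but still $<^*$-positive, which is exactly density once anchored at $0$.

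**The main obstacle** I anticipate is that $<^*$ is badly behaved with respect to $+$ — the remark explicitly warns that translation invariance fails in the "middle" region $-y <^* z <^* -x$ — so the usual trick "the midpoint $(x+y)/2$ works" is unavailable, and even reducing to $x = 0$ must be done with care rather than by naive subtraction. I expect to need (A1.4) in an essential, slightly clever way: it is the only axiom among (A1.1)–(A1.4) that is genuinely quadratic (it is the shadow of $\varphi^2 = \varphi + 1$), and density is a genuinely quadratic phenomenon, so the proof cannot go through with (A1.1)–(A1.3) alone. **Finally I would** double-check that the witness $v$ produced is nonzero and genuinely satisfies both $0 <^* v$ and $v <^* w$ by re-deriving the relevant $f$-value identities from the axioms, and conclude that $<^*$ is a dense linear order. (Whether it has endpoints is immaterial to the statement, though the remark's observation $0 <^* x$ for all nonzero $x$ shows $0$ behaves like a least element of the nonzero part, so some attention to the rôle of $0$ in the order will be needed in the write-up.)
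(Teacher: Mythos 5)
Your intuition about the ingredients is sound --- (A1.4) is indeed the engine, and the witness the paper uses is built from exactly the term $\bar f = x + f(x)$ that you suspect is relevant --- but the proposal has a real gap where you wave at ``reduce to $x=0$ with care.'' That reduction is not a simplification here: since $0$ is the $<^*$-least element, finding $v$ with $0 <^* v <^* w$ (for $w = z-y$) only asserts that $w$ is not the $<^*$-least \emph{nonzero} element, and then translating $0 <^* v <^* w$ back to $y <^* y+v <^* z$ requires adding $y$ to all three terms --- which is precisely the translation step that \Cref{lem-LO}(2) does \emph{not} give you for free (it excludes the ``middle'' region, and the lemma's hypotheses even require the left endpoint of the comparison to be nonzero, which $0$ is not). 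In other words, the ``care'' you defer is essentially the entire content of the proof.

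The paper's argument bypasses the reduction and directly exhibits $z + f(z-y) = y + \bar f(z-y)$ as the witness between $y$ and $z$. The two pieces of work are: (i) Claim 1, that $\bar f(x) <^* x$ for all $x$, which is a short computation from (A1.3) and (A1.4) --- note that this is a statement about $\bar f$, not $f$, whereas your sketch speaks of ``$f$ applied to a $<^*$-positive element,'' and the corresponding statement for $f$ is not what is used and would need separate justification; and (ii) the two translation checks, namely $y <^* y - z$ (so that \Cref{lem-LO}(2) applies to yield $z+f(z-y) <^* z$) and Claim 2 (used to rule out $y \not<^* z + f(z-y)$ by a direct $f$-value computation). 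Those two checks are exactly where the lack of full translation-invariance bites, and they cannot be folded into a clean ``base at $0$ then shift'' step. Your iteration idea (repeatedly applying $\bar f$ to descend) is also unnecessary: a single application of $\bar f$ to $z-y$, then shifting, already suffices. So the proposal identifies the right tools but leaves unresolved the very step --- controlled translation of the $<^*$-inequality by $y$ --- that the paper's Claims 1 and 2 are there to handle.
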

\begin{proof}
	
	In \Cref{lem-LO} we proved that $<^*$ is a linear order. Suppose that $y<^*z$, we show that the element $z+f(z-y)$ satisfies $ y<^*z+f(z-y)<^*z$. 
	
	\vspace*{.3cm}
	\textbf{Claim 1.} For all $x$ we have $\bar{f}(x)=f(x)+x<^*x$. 
	
	For all $x$ we have that $f(x-(f(x)+x))=f(-f(x))$ which, by (A1.3), is equal to $-f^2(x)-1$. On the other hand, by (A1.4), we have that $f(f(x)+x)=f^2(x)+f(x)+1$, and hence, we have $f(x)-f(f(x)+x)=-f^2(x)-1$. This, using \Cref{not-decimal-order}, shows that $f(x)+x<^*x$ for all $x$.
	
	Using Claim 1 for $z-y$, we have that 
	$f(z-y)+z-y<^*z-y$. 
	Therefore, it suffices to show that $y<^*y-z$ and then to use part (2) of \Cref{lem-LO} to obtain the desired inequality. 
	Suppose that $y\not<^*y-z$. By \Cref{not-decimal-order}, (A1.2), and (A1.3), this means that $f(y-z-y)=f(y-z)-f(y)-1$. 
	Using the assumption $y<^*z$, by \Cref{not-decimal-order}, (A1.2), and (A1.3), the latter expression is equal to $f(y)-f(z)-1-f(y)-1 = -f(z)-2$. 	
	On the other hand, $f(y-z-y)=f(-z)$ which, by (A1.3), is equal to $-f(z)-1$, which is a contradiction. 
	
	To show that $y<^*z+f(z-y)$, we first prove a claim. 
	
	\vspace*{.3cm}
	\textbf{Claim2.} $f(y)\not<^*z+f(z)=\bar{f}(z)$. 
	
	\textit{proof of the claim.} Otherwise, by \Cref{not-decimal-order}, we have that $f(z+f(z)-f(y))=f(z+f(z))-f^2(y)$ which, by (A1.4), is equal to $2f(d)+d-f(c)-c+1$. Using the assumption $y<^*z $, by \Cref{not-decimal-order}, we have that $f(z-y)=f(z)-f(y)$. hence, we have that $f(z+f(z)-f(c))=f(z+f(z-y))$ which, by (A1.2), is equal to $f(d)+f^2(z-y)+i$  where $i$ is either $0$ or $1$. Using (A1.4), the latter is equal to $f(z)+f(z-y)+z-y-1+i$ which, using the assumption $y<^*z$ again, equals $2f(z)+d-f(y)-y+j$ for some $j\in\{-1,0\}$ which is a contradiction and proves the claim. 
	
	Now, suppose that $y\not<^*z+f(z-y)$. This implies that $f(f(z-y)+z-y) = f(f(z-y)+z)-f(y)-1$. Using the assumption of $y<^*z$, the latter is equal to $f(f(z)+z-f(y))-f(y)-1$ which, by Claim 2, is equal to $f(f(z)+z)-f^2(y)-1-f(y)-1$. This, by (A1.4), is equal to $2f(z)+z-f(y)-y+1-1-f(y)-1$ or $ 2f(z)-2f(y)+z-y-1$.
	
	On the hand, by (A1.4) we have that $f(f(z-y)+z-y))=2f(z-y)+z-y$ which, using the assumption of $y<^*z$, is equal to $2f(z)-2f(y)+z-y$. This contradicts the outcome of the last paragraph.
	
\end{proof}

A direct consequence of this theorem is: 
\begin{cor}\label{crl-SOP}
	Any expansion of the additive structure of integers with a unary function satisfying parts (1)-(4) of \Cref{axiom-properties-of-f} has the strict order property.
\end{cor}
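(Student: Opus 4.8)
The plan is to read the strict order property straight off \Cref{thm-DLO}. I will use the standard fact that a complete theory $T$ has the strict order property whenever some formula $\psi(\bar x;\bar y)$ --- parameters allowed --- defines, in some model of $T$, a linear order on an infinite set: choosing an increasing $\omega$-chain in that order (after replacing the order by its reverse if need be) yields a sequence $(\bar a_n)_{n<\omega}$ with $\models\psi(\bar a_n;\bar a_m)$ iff $n<m$, which is exactly the configuration defining the strict order property. So it is enough to produce an infinite definable linear order inside a single model of axioms (1)--(4) of \Cref{axiom-properties-of-f}.

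Such a model is at hand: any expansion $M=\langle\ZZ,+,f,\dots\rangle$ of $\langle\ZZ,+\rangle$ by a unary $f$ obeying (A1.1)--(A1.4) works, its universe being the infinite set $\ZZ$. By \Cref{not-decimal-order} and \Cref{lem-LO} the relation $x<^*y$ is defined on $M$ by the quantifier-free formula $x\neq y\wedge f(y-x)=f(y)-f(x)$, and by \Cref{thm-DLO} it is a dense linear order on all of $\ZZ$. I would then extract from it an increasing $\omega$-chain $a_0<^*a_1<^*a_2<^*\cdots$: take $a_0$ to be any element that is not $<^*$-greatest (one exists since $\ZZ$ is infinite), and given $a_n$, pick $a_{n+1}$ strictly $<^*$-above $a_n$ if there is no greatest element, and otherwise pick it inside the nonempty open interval between $a_n$ and the greatest element, using density. (One could equally invoke the classical fact that every infinite linear order contains a copy of $\omega$ or of $\omega^\ast$, reversing $<^*$ in the second case.)

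Feeding this chain into the first paragraph with $\psi(x;y)$ the formula for $x<^*y$ then closes the argument: by totality and irreflexivity of $<^*$ (\Cref{lem-LO}(1)) we get $\models\psi(a_n;a_m)$ precisely when $n<m$, so $\psi$ witnesses the strict order property of $\theory(M)$, hence of every theory containing (A1.1)--(A1.4).

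I do not expect a real obstacle here: this corollary merely repackages \Cref{thm-DLO}. The two points needing a word of care are that the ambient structure is genuinely infinite --- immediate, as it expands $\langle\ZZ,+\rangle$, so the definable order $<^*$ is not trivially finite --- and routine bookkeeping with whatever exact formulation of the strict order property is in force, in particular swapping $<^*$ for its reverse if the chosen formulation insists on an increasing sequence. Density of $<^*$ only streamlines the construction of the chain and is not essential, an infinite linear order already sufficing.
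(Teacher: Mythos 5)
Your argument is correct and is exactly the content the paper leaves implicit when it calls the corollary ``a direct consequence'' of \Cref{thm-DLO}: the quantifier-free formula defining $<^*$ gives an infinite definable linear order on the universe $\ZZ$, and an infinite definable linear order is precisely the configuration witnessing the strict order property. Your added remarks --- that density is inessential and only an infinite chain is needed, and that one may pass to the reverse order if required by the exact SOP formulation --- are accurate bookkeeping and do not change the substance.
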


Although the structure $\zphiMinus$ deals with integers, it was observed in \cite{kz} and \cite{kvz} that the order relation over a countable subset of the ``real'' interval $(0,1)$ is definable within this structure. More precisely, considering the set of all decimal parts $[\varphi x]:=\varphi x-\lfloor \varphi x\rfloor$, it is easy to observe that for all integers $x$ and $y$ the relation $[\varphi x]<[\varphi y]$ coincides with the order $x<^*y$ defined above (see \Cref{not-decimal-order} again).

%
%
The one-dimensional Kronecker's lemma states that the set $\{[\varphi n]: n\in\ZZ\}$ is dense in the unit interval $(0,1)$. This theorem in crucial for proving the model-theoretic properties of the structure $ \zphiMinus $. \Cref{thm-DLO} shows that a first-order version of Kronecker's lemma is encoded in the first-order theory of $\zphiMinus$, namely: 

\begin{align}\label{eq-Kronecker}
\forall y\forall z \Big([\varphi y]<[\varphi z]\rightarrow \exists x\big([\varphi y]<[\varphi x]<[\varphi z]\big)\Big).	
\end{align}

That is, not only the sentence above is true in 
$\zphiMinus$, but it surprisingly turns out to be provable from the axioms describing the basic properties of the function $f$. In fact, the proof of \Cref{thm-DLO} provides a constructive way to find the desired element in the one-dimensional Kronecker's lemma:

\begin{cor}[Constructive Kronecker's Lemma]\label{crl-Kronecker}\hfill
	
Assuming (1)-(4) of \Cref{axiom-properties-of-f}, we have
\begin{align}\label{eq-Kronecker-const}
 [\varphi x]<[\varphi\big(f(y-x)+y\big)]<[\varphi y]
\end{align}
for all $x$ and $y$ satisfying $\fp{x}<\fp{y}$.
\end{cor}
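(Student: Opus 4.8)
The plan is to show that the statement of \Cref{crl-Kronecker} is nothing more than a restatement of the internal steps of the proof of \Cref{thm-DLO}, translated through the dictionary between the decimal order $<^*$ and the order on decimal parts $[\varphi x]$. The first thing to record is that, by the observation following \Cref{crl-SOP}, for all integers $a,b$ we have $[\varphi a]<[\varphi b]$ if and only if $a<^* b$. Hence the hypothesis $\fp{x}<\fp{y}$ is exactly $x<^* y$, and the desired conclusion \eqref{eq-Kronecker-const} is exactly the chain
\[
x <^* f(y-x)+y <^* y.
\]

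So what I would do is simply point back to the proof of \Cref{thm-DLO}. There, starting from $y<^* z$, it is shown that the element $z+f(z-y)$ satisfies $y<^* z+f(z-y)<^* z$; the two halves of this are Claim 1 together with part (2) of \Cref{lem-LO} for the right inequality $z+f(z-y)<^* z$, and Claim 2 together with (A1.4) for the left inequality $y<^* z+f(z-y)$. Renaming $z$ as $y$ and $y$ as $x$ in that argument (which only used hypotheses (A1.1)--(A1.4), i.e. parts (1)--(4) of \Cref{axiom-properties-of-f}), we obtain precisely $x<^* f(y-x)+y<^* y$ from $x<^* y$. Translating back via the equivalence $a<^* b \iff [\varphi a]<[\varphi b]$ yields \eqref{eq-Kronecker-const}, and this is constructive in the sense that the witness $f(y-x)+y$ is given by an explicit term in $x$ and $y$.

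There is essentially no obstacle here beyond bookkeeping: the corollary is extracted by reading off the explicit witness already produced inside the proof of \Cref{thm-DLO} and noting that that proof never appealed to anything past (A1.4). The one point that deserves a sentence of care is that the equivalence $[\varphi a]<[\varphi b]\iff a<^* b$ holds in the intended structure $\zphiMinus$, so that \eqref{eq-Kronecker-const} is a true statement about decimal parts; the underlying first-order content, namely $x<^* y \rightarrow x<^* f(y-x)+y<^* y$, is what is actually \emph{provable} from (A1.1)--(A1.4), as the proof of \Cref{thm-DLO} shows.
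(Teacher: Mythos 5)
Your proof is correct and takes exactly the same route as the paper: the paper gives no separate proof of \Cref{crl-Kronecker}, but instead remarks that the proof of \Cref{thm-DLO} already exhibits the witness $z+f(z-y)$, and you correctly perform the renaming $z\mapsto y$, $y\mapsto x$ to recover the term $f(y-x)+y$ and translate $<^*$ into comparison of decimal parts. The observation that the argument in \Cref{thm-DLO} never invokes (A1.5) is the right justification for the "assuming (1)--(4)" hypothesis.
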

%

\vspace*{.7cm}
\section{Restoring the numerical order, the structure $\zphi$}\label{sec-restoring-order}
\vspace*{.3cm}
	
Our aim is to present a model-complete theory that completely axiomatizes the structure $\zphi=\Zphi$. Only to make the context more clear, we will freely be using the subtraction function without adding it to the language. 

Let $\mathcal{L}=\{<,+,f,0\}$. We first strengthen \Cref{axiom-properties-of-f} by replacing (A1.1) with the following axiom scheme which, in particular, describes the expected properties of integers as an ordered group:

\begin{axiomsc*}\hfill
	
	\noindent $\text{(A1.1)}'$ \quad The axioms of Presburger arithmetic expressed in the language $\{<,+,0\}$.
\end{axiomsc*}

When both numerical and decimal orders are present in the structure, one needs to deal with extra complications which cannot be handled solely based on Kronecker's approximation lemma, and which make it necessary to have a more precise understanding of the distribution of decimal parts $[\varphi x]$  when $x$ ranges over a numerical interval like $(a,b)$. We will see that the decimal parts corresponding to the elements of Fibonacci sequence carry much of crucial information needed to understand this distribution. In addition, the extremum points which appear through the interplay of the existing orders need to be treated carefully.

By the following axiom, we guarantee the existence of such extrema; a fact that is already true in $\zphi$ since there are at most finitely many integers between two given ones:
\begin{axiom}\label{ax-min-max}
	For all $x<y$ there are $z_1 $ and $z_2$ such that $x<z_1<y $, $x<z_2<y$, and 
	\begin{align*}
			[\varphi z_1]&=\min \big\{[\varphi w]: x<w<y\big\},\\
			[\varphi z_2]&=\max \big\{[\varphi w]: x<w<y\big\}.
	\end{align*}
\end{axiom}
The following lemma shows that such extrema over intervals of the form $(0,b)$ are actually the Fibonacci numbers of the standard model:

\begin{lem}\label{lma-fib-min-max}
	Working in $\zphi$, suppose that $a\in\NN$. Then, $a$ is a Fibonacci number of an even (odd) index if and only if $\fp{a}$ is the minimum (maximum) of the following set: 
	\[ \Big\{\fp{n}\ \Big|\ 0< n\leq a\Big\}. \]
\end{lem}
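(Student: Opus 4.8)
We argue inside the standard structure $\zphi$, where real arithmetic is available. Put $\psi:=1-\varphi$, the conjugate root of $x^{2}-x-1$, so that $|\psi|=\varphi-1\in(0,1)$; the relation $\varphi^{2}=\varphi+1$ is equivalent to the two identities $|\psi|+|\psi|^{2}=1$ and $1-|\psi|=|\psi|^{2}$, which will do all the bookkeeping below. From $F_{n}=(\varphi^{n+1}-\psi^{n+1})/\sqrt 5$ one obtains the exact equality $\varphi F_{n}=F_{n+1}-\psi^{n+1}$, and reading off integer and fractional parts gives
\[
\fp{F_{2j}}=|\psi|^{2j+1}\qquad\text{and}\qquad\fp{F_{2j+1}}=1-|\psi|^{2j+2}\qquad(j\ge 0),
\]
so $\fp{F_{2j}}$ is small and strictly decreasing in $j$, while $\fp{F_{2j+1}}$ is close to $1$ and strictly increasing in $j$.

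Call $a\ge 1$ a \emph{record minimum} of $n\mapsto\fp n$ if $\fp a<\fp n$ for all $n$ with $0<n<a$, and define \emph{record maximum} dually. The core of the proof is to establish, by a single induction on $j$, that the record minima are exactly $F_{0},F_{2},F_{4},\dots$ and the record maxima are exactly $F_{1},F_{3},F_{5},\dots$. Concretely, one carries along at level $j$ the four assertions: (i) $\fp{F_{2j}}=\min\{\fp n:0<n\le F_{2j}\}$, attained only at $F_{2j}$; (ii) the same for $F_{2j+1}$ with $\max$; (iii) $\fp n>\fp{F_{2j}}$ whenever $F_{2j}<n<F_{2j+2}$; (iv) $\fp n<\fp{F_{2j+1}}$ whenever $F_{2j+1}<n<F_{2j+3}$. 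In the inductive step, the Fibonacci recursion $F_{m}=F_{m-1}+F_{m-2}$ cuts $(0,F_{m}]$ into an initial segment $(0,F_{m-2}]$ (handled directly by a lower-level assertion) together with two translated segments of the shape $(F_{l},F_{l}+F_{k}]$; on such a segment $\fp n=\fp{n-F_{l}}+\fp{F_{l}}\pmod 1$, and the two golden-ratio identities, combined with the lower-level bounds that locate $\fp{n-F_{l}}$, pin down this mod-$1$ reduction unambiguously, so the required estimates transfer. Each of (i)--(iv) at level $j$ invokes only assertions at levels $<j$ (with (iii)--(iv) possibly also invoking (i)--(ii) at level $j$), and level $0$ is vacuous, so the induction goes through.

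Granting this, the lemma is immediate. Since $\varphi$ is irrational, $\fp n\ne\fp a$ whenever $0<n<a$, so the condition ``$\fp a=\min\{\fp n:0<n\le a\}$'' says precisely that $a$ is a record minimum, which by the above holds exactly when $a$ is an even-index Fibonacci number; the statement for maxima and odd-index Fibonacci numbers is word-for-word the same with $\min$ replaced by $\max$. The unique integer carrying both an even and an odd index, $1=F_{0}=F_{1}$, causes no trouble: for $a=1$ the set $\{\fp n:0<n\le 1\}$ is a singleton, so $\fp{1}$ is trivially both its minimum and its maximum.

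The step I expect to be the real obstacle is (iii)--(iv): excluding a spurious record strictly between two consecutive Fibonacci numbers of the same parity. The estimate that continued-fraction theory supplies for free, namely $\|\varphi n\|>|\psi|^{2j+2}$ for all $n$ with $0<n<F_{2j+2}$ and $n\neq F_{2j+1}$, falls short by a factor $|\psi|$ of giving $\fp n>\fp{F_{2j}}=|\psi|^{2j+1}$. One closes the gap with the finer decomposition $n=F_{2j+1}+n''$, where $0<n''<F_{2j}$: the level-$(j-1)$ forms of (i) and (iii) give $\fp{n''}\ge\fp{F_{2j-2}}=|\psi|^{2j-1}$, whence $\fp n=\fp{n''}-|\psi|^{2j+2}\ge|\psi|^{2j-1}-|\psi|^{2j+2}=2|\psi|^{2j+1}>\fp{F_{2j}}$, and the symmetric argument settles (iv). Everything else is routine interval arithmetic driven by $|\psi|+|\psi|^{2}=1$ and $1-|\psi|=|\psi|^{2}$.
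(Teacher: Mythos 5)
Your argument is correct but follows a genuinely different path from the paper's. The paper reformulates the claim as additivity of $f$, namely $f(F_{2n}+x)=f(F_{2n})+f(x)$ for $0<x<F_{2n+1}$, imports this for $0<x\le F_{2n}$ as Fact~4 of the cited reference [KZ22], and then settles the residual range $F_{2n}<x<F_{2n+1}$ by an algebraic chain of $f$-identities hinging on $2F_{2n}=F_{2n+1}+F_{2n-2}$; the odd-index case is declared symmetric. You instead leave the language of $f$ entirely and work over $\mathbb{R}$: Binet's formula with the conjugate $\psi=1-\varphi$ yields the exact values $\fp{F_{2j}}=|\psi|^{2j+1}$ and $\fp{F_{2j+1}}=1-|\psi|^{2j+2}$, and the lemma falls out of a simultaneous induction on four interlocking statements. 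The paper's route is shorter given the imported fact and stays inside the algebraic vocabulary used throughout the section; yours is self-contained, produces explicit constants, and usefully isolates exactly where the off-the-shelf continued-fraction bound $\|\varphi n\|>|\psi|^{2j+2}$ falls one power of $|\psi|$ short and how the decomposition $n=F_{2j+1}+n''$ (rather than $n=F_{2j}+n'$) repairs it, a diagnostic nowhere in the paper. One minor bookkeeping point: the level-$0$ instance of your assertion (iv), that $\fp{2}<\fp{1}$, is a genuine one-line base case rather than vacuous as stated.
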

\begin{proof}
	We claim that for any Fibonacci number of an even index, say $F_{2n}$, we have that $f(F_{2n}+x)=f(F_{2n})+f(x)$ for all $0<x<f(F_{2n})=F_{2n+1}$. This claim asserts that $\fp{F_{2n}}<\fp{y}$ for all $F_{2n}<y<F_{2n+2}$. Having the claim, we can use the fact that $\fp{F_{2n+2}}<\fp{F_{2n}}$ to finish the proof. 
	\par 
	By Fact 4 in  \cite{kz}, for all natural number $n$ and any integer $y$ satisfying $0<y\leq F_{2n}$ we have that $f(F_{2n}+y)=f(F_{2n})+f(y)$. Therefore, it suffices to  prove the claim for all  integer $x$ satisfying $F_{2n}<x<F_{2n+1}$.
	
	For such an $x$, there exists $0<y<F_{2n-1}$ such that $x=F_{2n}+y$. Hence, we have that
	\begin{align*}
		f(F_{2n}+x)=f(F_{2n}+(F_{2n}+y))=f(2F_{2n}+y)=f(F_{2n+1}+F_{2n-2}+y).
	\end{align*}
	Since $y<F_{2n-1}$ and $F_{2n-2}+y<F_{2n}$, we can use Fact 4 in \cite{kz} to show that
	$f(F_{2n+1}+(F_{2n-2}+y))=f(F_{2n+1})+f(F_{2n-2}+y)+1$. On the other hand, as $y<F_{2n}$ we have that $f(F_{2n-2}+y)=f(F_{2n-2})+f(y) $. Therefore, we have that
	\begin{align*}
		f(F_{2n}+x)&=f(F_{2n+1}+(F_{2n-2}+y))\\ & = f(F_{2n+1})+f(F_{2n-2}+y) +1\\
		& = f(F_{2n+1})+f(F_{2n-2})+f(y) +1\\
		&=f(F_{2n+1}+F_{2n-2})+f(y)\\& =f(2F_{2n})+f(y)\\
		&=f(F_{2n})+f(F_{2n})+f(y)\\&=f(F_{2n})+f(F_{2n}+y)\\&=f(F_{2n})+f(x).
	\end{align*}
	In a similar way, one can prove that the maximum of the set specified in the lemma occurs at a Fibonacci number with odd index.
\end{proof}

The lemma above has the following consequence whose advantage may be better observed when contrasted with the fact that Fibonacci sequence--by being a recursive sequence--is definable in the language of Peano Arithmetic, which, in particular, contains multiplication:
\begin{cor}\label{crl-fib-definalbe}
	The set of Fibonacci numbers is  $\Pi_1$-definable in 
	$\zphi $ by an $ \mathcal{L} $-formula.
	
\end{cor}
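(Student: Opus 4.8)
The plan is to read off an explicit defining formula directly from \Cref{lma-fib-min-max}. That lemma tells us that, among natural numbers, the even-index Fibonacci numbers are exactly those $a$ for which $\fp{a}=\min\{\fp{n}:0<n\le a\}$, and the odd-index Fibonacci numbers are exactly those $a$ for which $\fp{a}=\max\{\fp{n}:0<n\le a\}$. Since every Fibonacci number is positive and has either even or odd index, it suffices to express each of these two extremality conditions by a $\Pi_1$ $\mathcal{L}$-formula and then take their disjunction.

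First I would recall (see \Cref{not-decimal-order} and the discussion in \Cref{sec-without-num-order}) that the decimal order $x<^*y:\equiv x\ne y\wedge f(y-x)=f(y)-f(x)$ is quantifier-free $\mathcal{L}$-definable and that, in $\zphi$, $\fp{x}<\fp{y}$ holds iff $x<^*y$. As $<^*$ is a linear order (\Cref{lem-LO}), the relation $\fp{a}\le\fp{n}$ is equivalent to the quantifier-free condition $\neg(n<^*a)$. Since the extremum in \Cref{lma-fib-min-max} is attained at $n=a$ itself, the statement ``$\fp{a}$ is the minimum of $\{\fp{n}:0<n\le a\}$'' is equivalent to $\forall n\,(0<n\le a\rightarrow\neg(n<^*a))$. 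Hence the even-index Fibonacci numbers are defined by the $\Pi_1$ formula
\[
  \mathrm{Fib}_{\mathrm{ev}}(a)\ :\equiv\ 0<a\ \wedge\ \forall n\,\big(0<n\le a\rightarrow\neg(n<^*a)\big),
\]
and symmetrically the odd-index ones by
\[
  \mathrm{Fib}_{\mathrm{od}}(a)\ :\equiv\ 0<a\ \wedge\ \forall m\,\big(0<m\le a\rightarrow\neg(a<^*m)\big).
\]
The conjunct $0<a$ is genuinely needed here: without it both universal clauses are vacuously true for every non-positive $a$.

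Finally, the set of Fibonacci numbers is defined by $\mathrm{Fib}_{\mathrm{ev}}(a)\vee\mathrm{Fib}_{\mathrm{od}}(a)$. To see that this stays $\Pi_1$, I would use the equivalence $(\forall n\,\phi)\vee(\forall m\,\psi)\equiv\forall n\,\forall m\,(\phi\vee\psi)$ (valid since $n$ and $m$ are distinct bound variables) to pull the quantifiers to the front; the leftover $0<a$ is quantifier-free. Thus the $\mathcal{L}$-formula
\[
  0<a\ \wedge\ \forall n\,\forall m\Big[\big(0<n\le a\rightarrow\neg(n<^*a)\big)\vee\big(0<m\le a\rightarrow\neg(a<^*m)\big)\Big]
\]
is $\Pi_1$ and defines the set of Fibonacci numbers in $\zphi$.

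There is no real obstacle here; everything is bookkeeping once \Cref{lma-fib-min-max} is available. The only points that need care are: excluding non-positive $a$ via the $0<a$ conjunct; observing that the minimum being attained is exactly what makes ``is the minimum of'' equivalent to the universal statement ``is $\le$ every element of''; and prenexing the disjunction of two $\Pi_1$ formulas without leaving the class $\Pi_1$.
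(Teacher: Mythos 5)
Your proposal is correct and is exactly the natural unpacking of \Cref{lma-fib-min-max} that the paper leaves implicit (the corollary is stated without a proof): rewrite the extremality conditions via the quantifier-free relation $<^*$, guard by $0<a$, take the disjunction, and prenex. The two points you flag — the need for the $0<a$ conjunct to exclude the vacuous cases, and the prenexing of the disjunction of two universal formulas — are precisely the details worth spelling out.
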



\begin{rem}\label{rem-fib-hull}
Having a formula--in a language not containing multiplication--that defines Fibonacci numbers in $\zphi$ enables one to talk about \textit{non-standard Fibonacci elements} in an arbitrarily given model of $\theory(\zphi)$. In fact, in any such model $ M $ one can speak of \textit{Fibonacci hull} of the model, which can be shown to be existentially closed in $ M $.	
\end{rem}

Using the corollary above, we can define the following functions in $\mathcal{L}$:

\begin{defn}\label{dfn-F-G}
	Let $F(x)$ and $G(x)$ denote the following definable functions:
	\begin{align}\label{eq-F-G}
		\begin{split}
		F(x)=y \ &\Leftrightarrow \ 0<y\leq x \ \wedge \ [\varphi y]=\min \{[\varphi w]: 0<w\leq x\}, \ \text{ and }\\
		G(x)=y \ &\Leftrightarrow \ 0<y\leq x \ \wedge \ [\varphi y]=\max \{[\varphi w]: 0<w\leq x\}.
		\end{split}
	\end{align}
 \end{defn}

\begin{lem}\label{lma-FibFunctions-Properties}
	The following sentences are true in $\zphi$:
	\begin{itemize}
		\item[(1)] For all $x$, the first Fibonacci number of an even index and strictly greater than $x$ is $\bar{f}(F(x))$.
		\item[(2)] For all $x$, if $G(x)<F(x)$, then the first Fibonacci number of an odd index and strictly greater than $x$ is $f(F(x))$. 
		\item[(3)] For all $x$, if $F(x)<G(x)$, then the first Fibonacci number of an odd index and strictly greater than $x$ is $f(\bar{f}(F(x)))$, or equally, $f^{-1}(F(x)-1)$.
	\end{itemize}
	As a consequence, the following hold in $\zphi$:
	\begin{align*}
		&\forall x \forall y\big( F(x)<y<\bar{f}(F(x))\rightarrow F(y)=F(x)\big),\\
		&\forall x \Big(G(x)<F(x)\rightarrow \forall y \big(G(x)<y<f(F(x))\rightarrow G(y)=G(x)\big)\Big),\\
		&\forall x \Big(F(x)<G(x)\rightarrow \forall y \big(G(x)<y<f(\bar{f}(F(x)))\rightarrow G(y)=G(x)\big)\Big).
		\end{align*}
\end{lem}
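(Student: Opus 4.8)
The plan is to derive everything from two ingredients: the description of $F$ and $G$ coming from \Cref{lma-fib-min-max}, and the arithmetic of $f$ on Fibonacci numbers. First I would promote \Cref{lma-fib-min-max} to the observation that, for every $x\geq 1$, $F(x)$ is the largest Fibonacci number of even index that is $\leq x$, and $G(x)$ is the largest Fibonacci number of odd index that is $\leq x$. Indeed $F(x)$ realizes $\min\{\fp{w}:0<w\leq x\}$, and since $F(x)\leq x$ the same value is already the minimum of $\{\fp{w}:0<w\leq F(x)\}$, so the forward direction of \Cref{lma-fib-min-max} forces $F(x)$ to be an even-index Fibonacci number; and since the decimal parts $\fp{F_{2n}}=\varphi^{-(2n+1)}$ strictly decrease in $n$, no even-index Fibonacci number can lie in the interval $(F(x),x]$, which gives maximality. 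The argument for $G(x)$ is dual, using that $\fp{F_{2n+1}}=1-\varphi^{-(2n+2)}$ strictly increases. I would also record, once and for all, the identities $f(F_{2n})=F_{2n+1}$, $f(F_{2n+1})=F_{2n+2}-1$, and hence $\bar{f}(F_{2n})=F_{2n}+f(F_{2n})=F_{2n+2}$; these follow from $\varphi F_k=F_{k+1}+(-1)^k\varphi^{-(k+1)}$, which is a consequence of $\varphi^2=\varphi+1$ and the recursion defining $F_k$, together with $0<\varphi^{-(k+1)}<1$.

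Part (1) is then immediate. Writing $F(x)=F_{2n}$, the element $\bar{f}(F(x))=F_{2n+2}$ is the even-index Fibonacci number immediately following $F(x)$; by maximality of $F(x)$ it exceeds $x$, and since no even-index Fibonacci number lies strictly between $F_{2n}$ and $F_{2n+2}$, it is the first even-index Fibonacci number strictly greater than $x$.

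For parts (2) and (3) I would carry out the index bookkeeping. Put $F(x)=F_{2n}$ and $G(x)=F_{2m+1}$. By the maximality statements above, $x<\bar{f}(F(x))=F_{2n+2}$ and $x<F_{2m+3}$, so $F(x)$, $G(x)$ and $x$ all lie in a single interval $[F_k,F_{k+1})$ between consecutive Fibonacci numbers. If $G(x)<F(x)$ then necessarily $k=2n$, which forces $m=n-1$; thus $x\in[F_{2n},F_{2n+1})$ and the first odd-index Fibonacci number $>x$ is $F_{2n+1}=f(F_{2n})=f(F(x))$. If $F(x)<G(x)$ then $k=2n+1$, which forces $m=n$; thus $x\in[F_{2n+1},F_{2n+2})$ and the first odd-index Fibonacci number $>x$ is $F_{2n+3}=f(F_{2n+2})=f(\bar{f}(F(x)))$, and the second expression given in (3) names the same element and is checked in the same way. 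Finally the three displayed consequences drop out at once: if $F(x)<y<\bar{f}(F(x))$ then $F(x)\leq y$ while the next even-index Fibonacci number $\bar{f}(F(x))$ exceeds $y$, so $F(x)$ is still the largest even-index Fibonacci number $\leq y$, i.e.\ $F(y)=F(x)$; and $G(y)=G(x)$ on the two stated intervals for the same reason, since by (2), respectively (3), the number $f(F(x))$, respectively $f(\bar{f}(F(x)))$, is exactly the odd-index Fibonacci number immediately following $G(x)$.

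This lemma is essentially a repackaging of \Cref{lma-fib-min-max}, so no single step is hard; the points that require care are (a) justifying that $F(x)$ and $G(x)$ are genuinely Fibonacci numbers of the asserted parity and are the largest such below $x$ --- this is where the strict monotonicity of the sequences $\fp{F_{2n}}$ and $\fp{F_{2n+1}}$ is used; (b) pinning down the parity and the exact index in the two subcases of (2) and (3); and (c) checking the degenerate small values, for example $x=1$, where $F_0=F_1=1$ and both (2) and (3) become vacuous --- which should be verified but causes no trouble.
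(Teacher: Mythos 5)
Your reconstruction is the natural one and, since the paper's own proof is a bare ``Easy to verify,'' it essentially fills in the intended content: identify $F(x)$ (resp.\ $G(x)$) as the largest even- (resp.\ odd-) index Fibonacci number $\le x$ via \Cref{lma-fib-min-max}, record the identities $f(F_{2n})=F_{2n+1}$, $f(F_{2n+1})=F_{2n+2}-1$, $\bar f(F_{2n})=F_{2n+2}$ (coming from $\varphi F_k=F_{k+1}+(-1)^k\varphi^{-(k+1)}$), and then track the parity of the index $k$ with $x\in[F_k,F_{k+1})$. Parts (1), (2), the principal claim of (3), and all three displayed consequences go through exactly as you argue, and the edge case $x=1$ (where $F_0=F_1=1$ makes (2) and (3) vacuous) is correctly handled.

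There is, however, a gap precisely where you wave your hand. You assert that $f^{-1}(F(x)-1)$ ``names the same element'' as $f(\bar f(F(x)))$ and ``is checked in the same way,'' but this is false. Writing $F(x)=F_{2n}$, one has $f(\bar f(F(x)))=f(F_{2n+2})=F_{2n+3}$, while your own identity $f(F_{2n-1})=F_{2n}-1=F(x)-1$ gives $f^{-1}(F(x)-1)=F_{2n-1}$. Concretely, for $x=3$: $F(x)=2$, $f(\bar f(2))=f(5)=8$, yet $f^{-1}(F(x)-1)=f^{-1}(1)=1$. So the ``or equally'' clause of part (3) is not an identity; moreover the same misreading of $f^{-1}(F(m)-1)$ --- which the paper describes in \Cref{lma-about-function-F} as ``the first Fibonacci number with an even index, less than $F(m)$,'' when it is in fact the odd-index predecessor $F_{2n-1}$ of $F(m)$ --- propagates there. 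This is an error in the paper's statement rather than in your Fibonacci arithmetic elsewhere, but you were asked to verify the claim, and the check does not ``go through in the same way''; it fails, and you should have flagged it rather than asserted it.
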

\begin{proof}
	Easy to verify. 
\end{proof}

\begin{nota}\label{not-fib-floor}
As $F(x)$ maps each integer $x$ to the greatest Fibonacci number of an even index and less than or equal to $x$, a more telling notation for $F$ would be $\fibfloor{-}$ which may be called the \textit{Fibonacci floor function}. We do not need a similar notation for Fibonacci numbers of an odd index, since the function $G$ can be defined using the functions $f $‌ and $ F$ as addressed in \Cref{lma-FibFunctions-Properties}. However, for the sake of better readability we postpone applying this notation until the end of this section and will keep using the symbols $F$ and $G$ until then; the only exception is part (1) of \Cref{not-language}.
\end{nota}

\begin{axiom}\label{ax-Fib-Functions}
	The functions $ F $ and $ G $ satisfy Formulas \eqref{eq-F-G} in \Cref{dfn-F-G}, and 
	parts (1) to (3) of \Cref{lma-FibFunctions-Properties}.
\end{axiom}
Returning to the more general situation of a numerical interval $(a,b)$, there is actually an algorithm in $\zphi$
to find the element of the smallest corresponding decimal part among all integers between $a$ and $b$. However, this approach cannot fully be applied to a non-standard model (see \Cref{subsec-algorithm}). We present the mentioned algorithm in \Cref{prop-algorithm} which can be interesting from a computational point of view.

Alternatively, we will see that the minimum and maximum points of certain formulas will carry the information needed to be preserved from a model to its extensions. First, we add the functions $ F $, formally denoted by $\fibfloor{-}$, to the language:

\begin{nota}\label{not-language}\hfil
	\begin{enumerate}

	\item 
	Let $ \mathcal{L}_{\fib} $ denote the language $ \mathcal{L}\cup\big\{\fibfloor{-}\big\} $ (see \Cref{not-fib-floor}).
	
	\item 
	In a structure $ M $ satisfying \Cref{ax-min-max}, for any elements $ a, b\in M $, let $ \min_{(a,b)}^{M} $ and $ \max_{(a,b)}^{M} $ respectively denote the elements $ u,v\in(a,b) $ with
	\begin{align*}
		[\varphi u]&=\min \big\{[\varphi t]: a<t<b\big\},\\
		[\varphi v]&=\max \big\{[\varphi t]: a<t<b\big\}.
	\end{align*}
	\end{enumerate}
\end{nota} 

From now on, we will be working in the language $\mathcal{L}_{\fib}$. Note that, by \Cref{lma-FibFunctions-Properties}, the function $G$ is definable using $f$ and $F$.
%

\newpage 
\begin{lem}[\textbf{Main Lemma}]\label{lma-Main}\hfill 
	
	Assume that $M_1\subseteq M_2$ are two models of \Cref{axiom-properties-of-f}, \Cref{ax-min-max}, and \Cref{ax-Fib-Functions} with
	$a,b\in M_1$, 
	then $\min_{(a,b)}^{M_1}=\min_{(a,b)}^{M_2}$.
\end{lem}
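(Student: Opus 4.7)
The plan is to realize $\min_{(a,b)}^{M}$ as the value of a single $\mathcal{L}_{\fib}$-term $\tau(a,b)$---produced through a finite, $\mathcal{L}_{\fib}$-decidable case split on $a,b$---in every model $M$ of our axiom package. Once that is done, the lemma is immediate: $\tau(a,b)^{M_1}=\tau(a,b)^{M_2}$ for all $a,b\in M_1$ because $M_1\subseteq M_2$ preserves every symbol of $\mathcal{L}_{\fib}$. Writing $u_i=\min_{(a,b)}^{M_i}$, one has $u_1\in M_1\cap(a,b)\subseteq M_2\cap(a,b)$, so minimality of $u_2$ in $M_2$ yields $u_2=u_1$ or $\fp{u_2}<^*\fp{u_1}$; the task is to rule out the second option.

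The natural dichotomy is whether $(a,b)$ contains an even-indexed Fibonacci element, and this is $\mathcal{L}_{\fib}$-detected by the test $\fibfloor{b-1}>a$. In the affirmative case, the defining property of $F$ from \Cref{ax-Fib-Functions} together with part~(1) of \Cref{lma-FibFunctions-Properties} forces $\fibfloor{b-1}$ to be the $<^*$-smallest decimal part among $\{\fp{w} : 0<w\le b-1\}$ in every model. Since $\fibfloor{b-1}\in(a,b)\subseteq(0,b-1]$, it must therefore also be $\min_{(a,b)}$, so $u_1=u_2=\fibfloor{b-1}$ and we may take $\tau(a,b)=\fibfloor{b-1}$ here.

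In the opposite case $\fibfloor{b-1}\le a$, the interval $(a,b)$ lies inside a Fibonacci gap $(c,\bar{f}(c)]$ with $c=\fibfloor{b-1}$. Writing each candidate as $n=a+m$ with $0<m<b-a$, axiom~(A1.2) combined with \Cref{not-decimal-order} partitions the $m$'s into a no-wraparound class $\{m:m<^*-a\}$ on which $\fp{n}=\fp{a}+\fp{m}$, and a wraparound class $\{m:-a<^* m\}$ on which $\fp{n}=\fp{a}+\fp{m}-1$. Wraparound candidates automatically satisfy $\fp{n}<^*\fp{a}$, so whenever the wraparound class is nonempty the overall minimum sits inside it, and locating it reduces to minimizing $\fp{m}$ over $m\in(0,b-a)$ subject to the decimal-order constraint $-a<^* m$; when the wraparound class is empty, the no-wraparound minimum is attained at $m=\fibfloor{b-a-1}$. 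The constructive Kronecker lemma (\Cref{crl-Kronecker}) supplies, in the wraparound subcase, an explicit $\mathcal{L}_{\fib}$-term in $-a$ and $b-a$ built from $f$ and $F$ that names the required minimizer, and a final $<^*$-comparison of the two subcase outputs yields $\tau(a,b)$.

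The principal obstacle is the wraparound subcase: \Cref{ax-min-max} only guarantees existence of the minimizer, whereas the proof needs a genuine $\mathcal{L}_{\fib}$-term for it so that absoluteness applies---otherwise the defining formula of $\min_{(a,b)}$ retains a universal quantifier that need not be preserved downward from $M_2$ to $M_1$. \Cref{crl-Kronecker} provides exactly this leverage, since it realises a specific element whose decimal part is sandwiched between two prescribed ones as an explicit composition of $f$ and $F$ applied to the parameters; iterating the construction (or using its Fibonacci-floor variants) pins down the sought extremum term-definably. With $\tau(a,b)$ in hand, the identity $\tau(a,b)^{M_1}=\tau(a,b)^{M_2}$, and hence $u_1=u_2$, is immediate.
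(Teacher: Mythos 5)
Your first case is sound: when $\fibfloor{b-1}>a$, the defining property of $F$ (\Cref{ax-Fib-Functions}, via \Cref{dfn-F-G}) makes $\fibfloor{b-1}$ the minimizer over $(0,b-1]$, hence over $(a,b)$, and this is a term, so it is absolute. But the whole content of the lemma lies in your second case, and there the argument has a genuine gap. You need, as an $\mathcal{L}_{\fib}$-term in $a,b$, the element $m\in(0,b-a)$ of least decimal part subject to the wraparound constraint $\fp{m}>1-\fp{a}$, and you invoke \Cref{crl-Kronecker} for this. Kronecker's lemma only produces \emph{some} element whose decimal part lies strictly between two prescribed ones; it says nothing about producing the \emph{extremal} such element, and ``iterating the construction'' does not help: the natural iteration (essentially the greedy Fibonacci descent of \Cref{prop-algorithm}) takes a number of steps that depends on the input, so it does not yield a single uniform term, and the paper's own discussion in \Cref{subsec-algorithm} stresses that this procedure breaks down in non-standard models precisely because non-standard elements need not admit finite Fibonacci representations. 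Indeed your overall strategy --- realizing $\min_{(a,b)}$ as one fixed $\mathcal{L}_{\fib}$-term $\tau(a,b)$ --- is far stronger than what the lemma asserts, and the remark following \Cref{crl-main-obstacle} (adding a two-variable $\min$ function does not obviously give quantifier elimination) is a warning sign that such term-definability should not be expected; also note that the constrained minimum you need is exactly what \Cref{ax-min-decimal-greater-than-phi-c} is later introduced to guarantee, an axiom you are not allowed to use here.

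The paper's proof avoids term-defining the minimum altogether by arguing with the element $c=\min^{M_1}_{(a,b)}$ itself as a parameter: minimality of $c$ forces $\fp{(c+x)}=\fp{c}+\fp{x}$ for all $x\in(0,b-c)^{M_1}$ and $\fp{(c-y)}=\fp{c}-\fp{y}+1$ for all $y\in(0,c-a)^{M_1}$; if $d=\min^{M_2}_{(a,b)}$ were $>c$, writing $d=c+u$ forces $\fp{u}>1-\fp{c}$, but then $G(b-c)\in M_1$ (definable from $f$ and $F$) would already violate the first identity inside $M_1$; if $d<c$, writing $d=c-u$ forces $\fp{u}<\fp{c}$, and $F(c-a)\in M_1$ violates the second. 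You would do better to redirect your argument along these lines: use the Fibonacci extremum terms $F(c-a)$ and $G(b-c)$ evaluated at differences involving the $M_1$-minimum, rather than trying to name the minimum itself by a term in $a,b$.
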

\begin{proof}
	Assume that $c=\min_{(a,b)}^{M_1}$ and 
	$d=\min_{(a,b)}^{M_2}$. 
	First, observe that $ c=\min^{M_1}_{[c,b)} $. Hence, for all $ x\in (0, b-c)^{M_1} $ we have that $ \fp{(c+x)}=\fp{c}+\fp{x} $. Otherwise, we have that 
	$ \fp{(c+x)}=\fp{c}+\fp{x}-1<\fp{c} $. Similarly, observe that $ c=\min^{M_1}_{(a,c]} $, and hence, for all $ y\in (0, c-a)^{M_1} $ we have that $ \fp{(c-y)}=\fp{c}-\fp{y}+1 $.
	 
	\par \noindent
	\textbf{Case 1.}  $d>c$: There exists $ u\in (0,b-c)^{M_2} $ such that $ d=c+u $. Obviously, $ \fp{d}<\fp{c} $, hence, 
	$ \fp{(c+u)} $ is forced to be equal to $ \fp{c}+\fp{u}-1 $. That is, $ \fp{u}>\fp{c}-1 $. By \Cref{dfn-F-G},  we have that $ \fp{G(b-c)}\geq \fp{u} $ which 
	contradicts the discussion above for $ x=G(b-c)$. 

	\par \noindent
	\textbf{Case 2.}  $d<c$: There exists $ u\in (0,c-a)^{M_2} $ such that $ d=c-u $. Obviously, $ \fp{d}<\fp{c} $, hence, 
	$ \fp{(c-u)} $ is forced to be equal to $ \fp{c}-\fp{u} $. That is, $ \fp{u}<\fp{c} $. By \Cref{dfn-F-G},  we have that $ \fp{F(c-a)}\leq \fp{u} $ which 
	contradicts the discussion before Case 1 for $ y=F(c-a)$.

\end{proof}

\begin{cor}
	Let $M_1\subseteq M_2$ be as in \Cref{lma-Main}, 
	and $a,b,c\in M_1$. If there exists an element $ x $ in 
	$ M_2 $ such that $ a<x<b $ and $ \fp{x}<\fp{c} $, then $ M_1 $ 
	contains also an element satisfying the same properties.
\end{cor}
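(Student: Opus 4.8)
The plan is to read the statement off almost immediately from the Main Lemma (\Cref{lma-Main}). First I would set $e:=\min_{(a,b)}^{M_1}$, which exists and satisfies $a<e<b$ by \Cref{ax-min-max}; in particular $e\in M_1$. By \Cref{lma-Main} we have $e=\min_{(a,b)}^{M_2}$, so when $e$ is viewed inside $M_2$ it still realises the minimal decimal part among all elements of the numerical interval $(a,b)$ of $M_2$.

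Next I would use the hypothesised witness. Suppose $x\in M_2$ satisfies $a<x<b$ and $\fp{x}<\fp{c}$. Since $a<x<b$, the defining property of $e=\min_{(a,b)}^{M_2}$ gives either $\fp{e}<\fp{x}$ or $\fp{e}=\fp{x}$; combining this with $\fp{x}<\fp{c}$ and the fact that the relation $\fp{\cdot}<\fp{\cdot}$ is (equivalent to $<^*$, hence) a linear order by \Cref{lem-LO} yields $\fp{e}<\fp{c}$, an inequality holding in $M_2$.

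It then remains to transfer this inequality back down to $M_1$. By \Cref{not-decimal-order}, $\fp{u}<\fp{v}$ is the quantifier-free $\mathcal{L}$-formula $u<^*v$, which is defined purely in terms of $f$ and is therefore absolute between the substructure $M_1$ and $M_2$. Since $e,c\in M_1$, the relation $\fp{e}<\fp{c}$ already holds in $M_1$, so $e$ is the desired element: it lies in $M_1$, satisfies $a<e<b$, and has $\fp{e}<\fp{c}$.

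There is essentially no obstacle beyond invoking \Cref{lma-Main}: that lemma is exactly what makes the witnessing minimum travel up to $M_2$ unchanged, and the quantifier-free definability of the decimal order is what makes the relevant inequality travel back down. The only point one should be scrupulous about is that $\min_{(a,b)}$ is genuinely a function of $a$ and $b$ alone (so that ``$\min_{(a,b)}^{M_1}$'' is well-defined), but this is immediate from \Cref{lem-LO}, since a linear order admits at most one minimum on a given set.
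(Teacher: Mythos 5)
Your proposal is correct and essentially identical to the paper's own proof: both reduce the claim to the observation that the existence of some $x\in(a,b)$ with $\fp{x}<\fp{c}$ is equivalent to $\fp{\min_{(a,b)}}<\fp{c}$, and then invoke \Cref{lma-Main} to identify $\min_{(a,b)}^{M_2}$ with $\min_{(a,b)}^{M_1}\in M_1$. You merely spell out one step the paper leaves implicit, namely that the inequality $\fp{e}<\fp{c}$, being quantifier-free, transfers from $M_2$ down to the substructure $M_1$.
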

\begin{proof}
	Existence of $x\in (a,b)$ with $[\varphi x]<[\varphi c]$ is
	equivalent to the fact that
	$\fp{\min_{(a,b)}^{M_2}}<[\varphi c]$. But,  by \Cref{lma-Main}, $ \min_{(a,b)}^{M_2} = \min_{(a,b)}^{M_1} $ belongs to $ M_1 $. 
\end{proof}

We require the corollary above to hold in a wider context, but this needs an extra axiom. Note that the minimum and maximum of the solutions of any definable set intersected with a bounded numerical interval always exist in the standard model, a fact that validates the following axiom:
\begin{axiom}\label{ax-min-decimal-greater-than-phi-c}
For all $u_1,u_2,v_1,v_2$, if there is $y\in (u_1,u_2)$ such that $[\varphi y]>[\varphi v_1]$ then 
there is an element
$x\in(u_1,u_2)$ with $ [\varphi x]=\min \big\{[\varphi t]: t\in (u_1,u_2), [\varphi t]>[\varphi v_1]\big\} $. 
Also, if there is $w\in (u_1,u_2)$   such that $[\varphi w]<[\varphi v_2]$, then there is an element $z\in(u_1,u_2)$ with
	$ [\varphi z]=\max \big\{[\varphi w]: w\in (u_1,u_2), \text{ and } [\varphi w]<[\varphi v_2]\big\} $.
\end{axiom}

\begin{lem}\label{lma-min-minus}
	In a model of \Cref{axiom-properties-of-f}, \Cref{ax-min-max}, \Cref{ax-Fib-Functions}, and \Cref{ax-min-decimal-greater-than-phi-c}, if there is $y\in (a,b)$ such that $[\varphi y]> [\varphi c]$ then, 
	the minimum element  $x$ given by the axiom above is obtained by the equation
	$x-c=\min^M_{(a-c,b-c)}$. Similarly, if there is $w\in (a,b)$ such that $[\varphi w]> [\varphi c]$ then, the maximum element $z$ in the axiom above is obtained by the equation
	$z-d=\max^M_{(a-d,b-d)}$.
\end{lem}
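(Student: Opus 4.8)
The plan is to show that translation by $-c$ turns the constrained extremum of \Cref{ax-min-decimal-greater-than-phi-c} into an unconstrained one, namely $\min_{(a-c,b-c)}^{M}$. First I would record two facts about $f$ and $<^*$, both immediate from (A1.2), (A1.3), and \Cref{not-decimal-order}: (a) for nonzero $c$ and any $s$, (A1.2) forces $f(s+c)\in\{f(s)+f(c),\,f(s)+f(c)+1\}$, and the ``no-carry'' alternative $f(s+c)=f(s)+f(c)$ holds precisely when $s<^*-c$; (b) $c<^*y$ implies $y-c<^*-c$. Recall also that $x<^*y$ is the same as $\fp x<\fp y$, so ``minimum of decimal parts'' means ``$<^*$-minimum''. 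I may assume $c\notin(a,b)$, so that $0\notin(a-c,b-c)$ --- this is the relevant case, and when $c\in(a,b)$ the constrained minimum over $\{t:\fp t>\fp c\}$ and $\min_{(a-c,b-c)}^{M}$ genuinely differ --- and likewise $c\neq0$ (if $c=0$ the claim is trivial, since then $(a-c,b-c)=(a,b)\not\ni0$).

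Now I would set $s_0:=\min_{(a-c,b-c)}^{M}$, which exists by \Cref{ax-min-max} as $y-c\in(a-c,b-c)$, and put $x_0:=s_0+c\in(a,b)$; the goal is $x_0=x$. First, $x_0$ is admissible, i.e.\ $c<^*x_0$: by minimality $s_0\leq^*y-c$, and $y-c<^*-c$ by (b), hence $s_0<^*-c$; since $s_0\neq0$, fact (a) gives $f(s_0+c)=f(s_0)+f(c)$, which is exactly $c<^*x_0$. Second, $x_0$ beats every admissible competitor: for $t\in(a,b)$ with $c<^*t$ we have $t-c\in(a-c,b-c)$, so $s_0\leq^*t-c$; if this is an equality we are done, and otherwise $s_0<^*t-c$, and since $t-c<^*-c$ --- equivalently $c<^*-(t-c)$ --- the side condition in \Cref{lem-LO}(2) is met, so adding $c$ yields $x_0<^*t$. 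Thus $\fp{x_0}\leq\fp t$ for every admissible $t$, so $x_0$ realises the constrained minimum; therefore $x_0=x$, i.e.\ $x-c=\min_{(a-c,b-c)}^{M}$. The maximum statement is proved symmetrically, with $\max_{(a-c,b-c)}^{M}$ in place of $\min_{(a-c,b-c)}^{M}$; here the element $0$, which lies in $(a-c,b-c)$ when $c\in(a,b)$, causes no trouble, since $\fp0=0$ is never a maximum.

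I expect the main obstacle to be the first half of the second paragraph: showing that the \emph{global} minimizer $s_0$ over the shifted interval, transported back by $+c$, still lands strictly $<^*$-above $c$. This is what pins $x$ down as $s_0+c$, and it is precisely here that the hypothesis ``some $y\in(a,b)$ has $\fp y>\fp c$'' enters --- it forces $\fp{s_0}<\fp{(-c)}$ --- together with the reflection law (A1.3) that underlies fact (a). The secondary nuisance is that $<^*$ is only partially compatible with $+$, so each use of \Cref{lem-LO}(2) requires checking its hypothesis; these come out true exactly because the elements involved satisfy $s_0<^*-c$ and $t-c<^*-c$.
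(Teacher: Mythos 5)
Your argument is correct and proves the same fact, but it runs in the opposite direction from the paper's. The paper starts from the constrained minimizer $x$ supplied by \Cref{ax-min-decimal-greater-than-phi-c} and checks directly, by a two-case decimal-part computation, that $x-c$ beats every $t-c$ with $t\in(a,b)$: if $\fp t>\fp c$ then $\fp{(t-c)}=\fp t-\fp c\geq\fp x-\fp c=\fp{(x-c)}$, and if $\fp t<\fp c$ then $\fp{(t-c)}=\fp t-\fp c+1>\fp x-\fp c$ because $\fp x<1$. You instead start from the unconstrained minimizer $s_0=\min^M_{(a-c,b-c)}$, verify that $s_0+c$ is admissible (that is, $c<^*s_0+c$) via $s_0\leq^*y-c<^*-c$, and then show $s_0+c$ dominates every admissible $t$ by translating $s_0<^*t-c$ through \Cref{lem-LO}(2). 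Both routes are sound; the paper's is lighter (it never needs the partial compatibility of $<^*$ with addition, only the raw carry/no-carry identities for $\fp{}$), while yours needs \Cref{lem-LO}(2) and a check of its side condition at each use. On the other hand your version is more scrupulous about corner cases: you correctly observe that when $c\in(a,b)$ the element $0$ lies in $(a-c,b-c)$, forcing $\min^M_{(a-c,b-c)}=0$ and hence $x=c$ by the stated equation, which contradicts $\fp x>\fp c$; the paper's proof tacitly skips the subcase $\fp t=\fp c$ (i.e.\ $t=c$) and so glosses over exactly this, and the lemma as written is strictly speaking false for $c\in(a,b)$. That imprecision is harmless in the paper's subsequent use (\Cref{crl-main-obstacle}), since one can always shrink $(a,b)$ to the half not containing $c$ before applying the lemma, but you were right to flag it.
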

\begin{proof}
	As $\fp{x}>\fp{c}$, we have that $\fp{(x-c)}=\fp{x}-\fp{c}$. For any $t\in(a,b)$ consider the element $t-c\in(a-c,b-c)$. If $\fp{t}>\fp{c}$, then $\fp{(t-c)}=\fp{t}-\fp{c}$ which is strictly greater than $\fp{x}-\fp{c}$. 
	On the other hand, if $\fp{t}<\fp{c}$, then $\fp{(t-c)}=\fp{t}-\fp{c}+1$ which is obviously greater than $\fp{x}-\fp{c}$.

For the second part, note that $\fp{(z-c)}=\fp{z}-\fp{d}+1$ as $\fp{z}<\fp{d}$. If $t\in(a,b) $ and $\fp{t}<\fp{d}$, then $\fp{(t-d)}=\fp{t}-\fp{d}+1$ which is strictly smaller than $\fp{z}-\fp{d}+1$. On the other hand, if $\fp{t}>\fp{d}$, then $\fp{(t-d)}=\fp{t}-\fp{d}$ which is strictly smaller than $\fp{z}-\fp{d}+1$.    

\end{proof}

\begin{cor}\label{crl-main-obstacle}
	Assume that $M_1 \subseteq M_2$ are models of \Cref{axiom-properties-of-f}, Axioms \ref{ax-min-max}, \ref{ax-Fib-Functions}, and \ref{ax-min-decimal-greater-than-phi-c} with $a,b,c,d\in M_1$. If $M_2$ satisfies the formula
	\mbox{$\exists x\ \big(a<x<b\wedge [\varphi c]<[\varphi x]<[\varphi d]\big)$},
	then so does $M_1$.
\end{cor}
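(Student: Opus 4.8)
The plan is to replace an arbitrary $M_2$-witness by a \emph{canonical} one that is pinned down by a shift, and then transport it into $M_1$ via the Main Lemma. Assume $M_2$ satisfies $\exists x\,\big(a<x<b\wedge[\varphi c]<[\varphi x]<[\varphi d]\big)$, witnessed by some $x$. In particular $a<b$ and $[\varphi x]>[\varphi c]$, so \Cref{ax-min-decimal-greater-than-phi-c}, applied inside $M_2$, yields an element $x'\in(a,b)$ with $[\varphi x']=\min\big\{[\varphi t]:t\in(a,b),\ [\varphi t]>[\varphi c]\big\}$. Since $x$ itself belongs to the set over which this minimum is taken, we get $[\varphi c]<[\varphi x']\le[\varphi x]<[\varphi d]$, so $x'$ is again a witness of the formula in $M_2$, and it is the distinguished one.

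Next I would locate $x'$ inside $M_1$. By \Cref{lma-min-minus}, applied in $M_2$ (its hypothesis is met, as $x$ shows there is $y\in(a,b)$ with $[\varphi y]>[\varphi c]$), this distinguished witness satisfies $x'-c=\min^{M_2}_{(a-c,\,b-c)}$. Now $a-c,b-c\in M_1$ and $a-c<b-c$, so \Cref{lma-Main} gives $\min^{M_2}_{(a-c,\,b-c)}=\min^{M_1}_{(a-c,\,b-c)}$; in particular this element lies in $M_1$, and adding $c\in M_1$ shows $x'\in M_1$.

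It remains to transfer the three conditions down. The statements $a<x'<b$ and $[\varphi c]<[\varphi x']<[\varphi d]$ are quantifier-free $\mathcal{L}$-conditions on the parameters $a,b,c,d$ and on $x'$ (recall from \Cref{not-decimal-order} that $[\varphi s]<[\varphi t]$ abbreviates the $\mathcal{L}$-formula $s<^* t$), and all of these elements now lie in $M_1$; since $M_1\subseteq M_2$ is a substructure and the conditions hold in $M_2$, they hold in $M_1$ as well, so $x'$ witnesses the formula in $M_1$. I do not expect a genuine obstacle here, as the substantive work is already contained in \Cref{lma-min-minus} and \Cref{lma-Main}; the only point requiring care is the bookkeeping of which model each extremum is computed in, together with the observation that the passage from the arbitrary witness $x$ to the extremal witness $x'$ is precisely what makes \Cref{lma-min-minus} and \Cref{lma-Main} applicable — a non-extremal witness need not lie in $M_1$, whereas the extremum is determined by a shift and one invocation of the Main Lemma.
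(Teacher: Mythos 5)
Your proof is correct and follows essentially the same route as the paper's: pass to the extremal witness $x'$ via \Cref{ax-min-decimal-greater-than-phi-c}, rewrite it as $c+\min^{M_2}_{(a-c,b-c)}$ by \Cref{lma-min-minus}, and invoke \Cref{lma-Main} to conclude $x'\in M_1$. The only difference is that you spell out why $[\varphi x']<[\varphi d]$ and why the quantifier-free conditions descend to the substructure, both of which the paper leaves implicit.
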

\begin{proof}
	Using the 
	\Cref{ax-min-decimal-greater-than-phi-c}, there is $x'\in M_2$ with $\fp{c}<\fp{x'}$ such that $ [\varphi x']=\min \big\{[\varphi t]: t\in (a,b), [\varphi t]>[\varphi c]\big\} $. For this element, we clearly have $\fp{x'}<\fp{d}$. By \Cref{lma-min-minus}, $x'$ is equal to $c+\min^{M_2}_{(a-c,b-c)}$. By \Cref{lma-Main}, $\min^{M_2}_{(a-c,b-c)}=\min^{M_1}_{(a-c,b-c)}$, and hence, $x'$ already belongs to $M_1$.
\end{proof}

\begin{rem}
	To better clarify the situation, we emphasize that the argument above does \textit{not} claim that for any model $M$ and any set of elements $a,b,c,d\in M$, the 
	decimal part of the element $ x=c+\min_{(a-c,b-c)} $ falls between the decimal parts of $\fp{c}$ and $\fp{d}$. In fact, there might not exist an element $x\in(a,b)$ with $\fp{x}>\fp{c}$ at all, and in such a case, the decimal part of the element $c+\min_{(a-c,b-c)}$ does not really ly in the decimal interval $(\fp{c},\fp{d})$.  This, in particular, shows why our argument for model-completeness does not extend--at least straightforwardly--to a proof of quantifier elimination even by adding a two-variable function mapping $(a,b)$ to $\min_{(a,b)}$.
\end{rem}

\par
Before proceeding into \Cref{thm-model-completeness}, we need the following lemma which states a set of properties concerning the behaviour of the function $F$:
\begin{lem}\label{lma-about-function-F}
	For all natural numbers $m$ and $n$, the value of $F(m+n)$ can be calculated as the following:
	\[
	\begin{cases}
		f(F(m))+F(m), &\text{if } F(m)\geq F(n) \wedge \big(m+n-(F(m)+F(n))\geq f^{-1}(F(m)-1)\big)\\
		F(m),&\text{if } F(m)\geq F(n) \wedge  \big(m+n-(F(m)+F(n)) <  f^{-1}(F(m)-1)\big)	\\
		f(F(n))+F(n), &\text{if } F(n)>F(m) \wedge \big(m+n-(F(m)-F(n))\geq f^{-1}(F(n)-1)\big)\\
		F(n),&\text{if } F(n)>F(m) \wedge  \big(m+n-(F(m)-F(n)) <  f^{-1}(F(n)-1)\big)	
	\end{cases},
	\]
	where  $f(F(m))+F(m)$ and  $f^{-1}(F(m)-1)$ are the first Fibonacci numbers with an even index, that are respectively
	greater and less than $F(m)$.
\end{lem}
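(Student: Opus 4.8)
The plan is to reduce the computation of $F(m+n)$ to a case analysis driven by which of $F(m)$, $F(n)$ is the larger Fibonacci floor, exactly as the statement is organized, and in each case to exploit the fact—already established in \Cref{lma-fib-min-max} and packaged as \Cref{ax-Fib-Functions}—that $F(x)$ is the largest even-index Fibonacci number $\le x$, together with the additivity identity $f(F(k)+t)=f(F(k))+f(t)$ valid for $0<t<f(F(k))$. By symmetry of the roles of $m$ and $n$ it suffices to treat the case $F(m)\ge F(n)$; write $F_{2k}=F(m)$, so $f(F(m))=F_{2k+1}$ and (by \Cref{lma-FibFunctions-Properties}) $f^{-1}(F(m)-1)=f(\bar f(F(m)))=F_{2k-1}$ is the largest odd-index Fibonacci below $F_{2k}$, while $\bar f(F(m))=F(m)+f(F(m))=F_{2k+2}$ is the next even-index one. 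The content of the first two cases is then: $F(m+n)=F_{2k+2}$ precisely when $m+n\ge F_{2k+2}$, and $F(m+n)=F_{2k}$ otherwise (a third alternative, that $F(m+n)$ is some even-index Fibonacci strictly above $F_{2k+2}$, must be excluded).

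The heart of the argument is to translate the defining inequalities $m+n\ge F_{2k+2}$ and $m+n<F_{2k+2}$ into the shifted form that appears in the lemma, $m+n-(F(m)+F(n))\gtrless f^{-1}(F(m)-1)$. First I would observe that $F(m)\le m<\bar f(F(m))=F_{2k+2}$ and likewise $F(n)\le n$; hence $F(m)+F(n)\le m+n$, so the quantity $m+n-(F(m)+F(n))$ is a well-defined nonnegative element. Next, using $F(m)=F_{2k}$ and the Fibonacci recurrence $F_{2k+2}=F_{2k+1}+F_{2k}=F_{2k}+F_{2k}+F_{2k-1}$, one gets $F_{2k+2}-F(m)-F(n)=F_{2k}+F_{2k-1}-F(n)$; I would then need the auxiliary bound $F(n)\le F_{2k}$ (which holds because $F(n)\le F(m)=F_{2k}$) to see that this is $\ge F_{2k-1}=f^{-1}(F(m)-1)$ with equality iff $F(n)=F_{2k}$. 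Combining, $m+n\ge F_{2k+2}$ is equivalent to $m+n-(F(m)+F(n))\ge F_{2k+2}-(F(m)+F(n))$... which is slightly too coarse; the cleaner route is to show directly that $m+n-(F(m)+F(n))\ge f^{-1}(F(m)-1)$ iff $m+n\ge F(m)+F(n)+F_{2k-1}$, and then that the right-hand side equals (or is squeezed against) $F_{2k+2}$ using the recurrence and the range constraints $F(n)\le n<\bar f(F(n))$. This squeezing—showing $F(m)+F(n)+F_{2k-1}$ sits in the window $[F_{2k+2}, F_{2k+4})$ when $F(n)$ is maximal and collapses appropriately otherwise—is the step I expect to be fiddly.

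Once the equivalence of the inequality conditions is in hand, the value computation in each case is immediate from \Cref{lma-FibFunctions-Properties} and the additivity of $f$: in the ``$\ge$'' subcase write $m+n=F_{2k+2}+r$ with $0\le r$, check $r<f(F_{2k+2})$ using the upper bound $m<F_{2k+2}$ and $n<\bar f(F(n))\le\bar f(F_{2k})=F_{2k+2}$ (so $m+n<2F_{2k+2}\le F_{2k+2}+f(F_{2k+2})$ as $f(F_{2k+2})=F_{2k+3}\ge F_{2k+2}$), and conclude $F(m+n)=F_{2k+2}=f(F(m))+F(m)$ since $F_{2k+2}$ is then the largest even-index Fibonacci $\le m+n$; in the ``$<$'' subcase the same bookkeeping shows $m+n<F_{2k+2}$, while $m+n\ge F(m)=F_{2k}$, so $F(m+n)=F_{2k}=F(m)$, again because no even-index Fibonacci lies strictly between $F_{2k}$ and $m+n$. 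The symmetric cases $F(n)>F(m)$ follow verbatim after swapping $m\leftrightarrow n$. The main obstacle, as flagged, is the book-keeping with the Fibonacci recurrences needed to certify that the shifted threshold $m+n-(F(m)+F(n))$ crosses $f^{-1}(F(m)-1)$ at exactly the point where $m+n$ crosses the next even-index Fibonacci number—everything else is a routine application of the already-established additivity and extremality properties of $F$ and $f$.
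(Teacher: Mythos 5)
Your reduction---show $F(m+n)=F_{2k+2}$ exactly when $m+n\ge F_{2k+2}$, with the easy bound $m+n<2F_{2k+2}\le F_{2k+4}$ excluding higher even-index Fibonaccis---is the correct structure, and the endgame bookkeeping is fine. But the step you flag as ``fiddly'' is not a technicality to be squeezed away: you have already computed that $F_{2k+2}-F(m)-F(n)=F_{2k}+F_{2k-1}-F(n)$, which strictly exceeds the lemma's threshold $f^{-1}(F(m)-1)=F_{2k-1}$ whenever $F(n)<F(m)$. So the lemma's first case fires before $m+n$ actually reaches $F_{2k+2}$, and the equivalence you need simply fails. Concretely, take $m=21$, $n=5$: then $F(m)=13=F_6\ge 5=F(n)$ and $m+n-(F(m)+F(n))=26-18=8\ge 8=f^{-1}(12)$, so the stated formula gives $F(26)=f(13)+13=34$, whereas in fact $F(26)=13$.

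What the argument actually needs is the threshold $2F(m)$ in place of $F(m)+F(n)$: since $F(m)=F_{2k}$ and $f^{-1}(F(m)-1)=F_{2k-1}$ are consecutive, one has the clean identity $2F(m)+f^{-1}(F(m)-1)=F_{2k}+(F_{2k}+F_{2k-1})=F_{2k}+F_{2k+1}=\bar f(F(m))$, so that $m+n-2F(m)\ge f^{-1}(F(m)-1)$ is \emph{exactly} the condition $m+n\ge F_{2k+2}$. With that replacement the rest of your plan goes through verbatim. (The paper's own proof contains the corresponding slip: it asserts $F(m)+F(n)+f^{-1}(F(m)-1)=f(F(m))+F(m)+f(n)$, but the left side equals $f(F(m))+F(n)$, and these agree only if $F(m)+f(n)=F(n)$, which never holds for $n\ge 1$. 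Separately, the identity $f^{-1}(F(m)-1)=f(\bar f(F(m)))$ you quoted from \Cref{lma-FibFunctions-Properties}(3) is also off---$f(\bar f(F_{2k}))=f(F_{2k+2})=F_{2k+3}$, not $F_{2k-1}$---though you use only the correct value $F_{2k-1}$ afterward, so this does not propagate.)
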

\begin{proof}
	Suppose that  $F(m)\geq F(n)$ and $m+n-(F(m)+F(n))\geq f^{-1}(F(m)-1)$. In this case, since $F(m) $ and $f^{-1}(F(m)-1) $ are two consecutive Fibonacci numbers, we have $F(m)+F(n)+ f^{-1}(F(m)-1)= f(F(m))+F(m)+f(n)$. Therefore, we have that
	$f(F(m))+F(m)\leq m+n$, which implies $F(m+n)=f(F(m))+F(m)$. 
	\par 
	In the case that the second condition does ot hold, that is $m+n-(F(m)+F(n))< f^{-1}(F(m)-1$, then $m+n<f(F(m))+F(m)$ and hence we have that $F(m+n)=F(m) $.
	\par 
	The argument similarly applies to other cases.
\end{proof}
\begin{axiom}\label{ax-F}
	The statement of the theorem above. 
\end{axiom}

\begin{defn}
	Denote by $T$ the $\mathcal{L}_{\fib}$-theory consisting of
   \Cref{axiom-properties-of-f}, Axioms \ref{ax-min-max}, \ref{ax-Fib-Functions}, \ref{ax-min-decimal-greater-than-phi-c}, and \ref{ax-F}.
\end{defn}

\par 
It is helpful to note that for two models $M_1\subseteq M_2$ of $T$, and for an element $x\in M_2$, the numerical cut of $x $ over $M_1$ 
carries some information about the decimal cut of $\fp{x}$ over $M_1$. More precisely, if $a,b\in M_1$ and $x$ belongs to $(a,b)$, then we have $\fp{ \min^{M_2}_{(a,b)}}<\fp{x}<\fp{ \max^{M_2}_{(a,b)}}$, and this extrema already belongs to $M_1$ by \Cref{lma-Main}. However, the more tightening of the numerical intervals containing $x$, that is the whole numerical cut of $x$ over $M_1$, does not suffice to determine all the decimal information required for our purpose. We will deal with this situation in the course of the proof of \Cref{thm-model-completeness}.

%

\begin{thm}\label{thm-model-completeness}
	$T$
	is model-complete.
\end{thm}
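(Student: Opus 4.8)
The plan is to verify model-completeness via Robinson's test: it suffices to show that whenever $M_1\subseteq M_2$ are models of $T$, $\bar a\in M_1$, $\psi(\bar y,\bar a)$ is a quantifier-free $\mathcal L_{\fib}$-formula and $M_2\models\exists\bar y\,\psi(\bar y,\bar a)$, then already $M_1\models\exists\bar y\,\psi(\bar y,\bar a)$. Fix a witnessing tuple $\bar d=(d_1,\dots,d_k)$ in $M_2$; we must produce a witness inside $M_1$ (since $\psi$ is quantifier-free, a tuple from $M_1$ witnesses $\psi$ in $M_1$ iff it does so in $M_2$, so the two demands coincide). The single-variable, single-decimal-constraint instance of exactly this transfer is already available: it is the content of \Cref{crl-main-obstacle}, which rests on \Cref{lma-Main}. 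So the task reduces to an iterated reduction of the general statement to that corollary.

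First I would attach to $\bar d$ a \emph{configuration} over $M_1$: the numerical order of each $d_i$ relative to $\bar a$, to the other $d_j$, and to the finitely many Fibonacci parameters occurring in $\psi$; the residues of the $d_i$ and of the $f(d_i)$ modulo the integers occurring in $\psi$; and the decimal order $<^*$ among the decimal parts $\fp{d_i}$, $\fp{f(d_i)}$ and the decimal parts of the parameters. Note that the decimal position of $\fp{f(d_i)}$ among $\{\fp a:a\in M_1\}$ is determined by that of $\fp{d_i}$, because $\fp{y}\mapsto\fp{f(y)}$ is a decimal-order-reversing bijection of $\{\fp a:a\in M_1\}$ onto itself, the discrete shadow of $\varphi^2=\varphi+1$ that is readable off (A1.3)--(A1.4) of \Cref{axiom-properties-of-f}; likewise $\fibfloor{d_i}$ is a Fibonacci element determined by $d_i$. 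Thus only finitely many configurations occur, and it is enough to handle the one realised by $\bar d$. Fixing it, I would rewrite $\psi(\bar d,\bar a)$ into a normal form: every occurrence of $f$ on a term containing some $y_i$ is linearised via (A1.2)--(A1.4), the $0/1$ correction being dictated by the decimal order (\Cref{lem-LO}); every occurrence of $\fibfloor{-}$ on a linear combination is reduced, by iterated use of \Cref{lma-about-function-F} (Axiom~\ref{ax-F}) together with \Cref{lma-FibFunctions-Properties} and the fact that $\fibfloor{-}$ fixes Fibonacci elements, to $\fibfloor{-}$ of a single variable or of a parameter; and $G$ is eliminated through its definition from $f$ and $\fibfloor{-}$ recorded after \Cref{lma-FibFunctions-Properties}. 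Under the fixed configuration this turns $\psi$ into a finite conjunction of Presburger constraints (inequalities and congruences) on $(y_1,\dots,y_k)$ together with decimal constraints, each of the form $\fp\sigma<\fp{y_i}<\fp\tau$ (one-sided or vacuous instances allowed), with $\sigma,\tau$ linear combinations of parameters and of $y_1,\dots,y_{i-1}$.

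It then remains to realise this reduced system inside $M_1$. Order the variables so that the decimal constraint on $y_i$ refers only to parameters and to $y_1,\dots,y_{i-1}$ (the normal form can be arranged this way; a decimal comparison between two of the $y$'s is, under the fixed configuration, a shift by a determined constant, so its dependency can be kept acyclic and topologically sorted). Process the variables in that order. When we reach $y_i$, the variables $y_1,\dots,y_{i-1}$ have already received values $d_1',\dots,d_{i-1}'\in M_1$ chosen with the same numerical cut, congruences and decimal cut over $M_1$ that $d_1,\dots,d_{i-1}$ have over $M_1$ in $M_2$; the remaining constraints on $y_i$ then read $a'<y_i<b'$, $y_i\equiv r\pmod m$, $\fp{c'}<\fp{y_i}<\fp{d'}$ with $a',b',c',d'\in M_1$. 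This is precisely an instance handled by \Cref{crl-main-obstacle} once the congruence is folded in --- either by the substitution $y_i=m y_i''+r$ (which only enlarges, finitely, the set of configurations, since $f(m y_i''+r)$ and $\fp{m y_i''+r}$ are controlled by $\fp{y_i''}$ after a further finite split) or by proving the evident congruence-refined version of \Cref{crl-main-obstacle} through the extremum argument of \Cref{lma-Main} applied to $\{t\in(a',b'):t\equiv r\pmod m\}$. Since $M_2$ has a solution $d_i$, the corollary yields a solution $d_i'\in M_1$, chosen with the matching cuts and residues so that every later constraint mentioning $y_i$ survives. After $k$ steps we obtain $(d_1',\dots,d_k')\in M_1$ satisfying $\psi$; the purely numerical backbone at each step is legitimate because Presburger arithmetic (axiom $\text{(A1.1)}'$) is itself model-complete.

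The principal difficulty lies in the normal-form step, and more precisely in taming the Fibonacci floor: one must show that $\fibfloor{-}$, composed with itself and with $f$ and evaluated at an arbitrary linear combination of $\bar y$ and the parameters --- including negative arguments and arguments of the shape $f(y_i)$ --- simplifies, within each configuration, to $\fibfloor{-}$ of a single variable or a constant. Axiom~\ref{ax-F} is tailored exactly to $\fibfloor{m+n}$ for naturals $m,n$, so the work is to reduce the general case to it and to check that the configuration data (numerical cuts, congruences of $y_i$ and of $f(y_i)$, decimal cuts, positions of nearby Fibonacci parameters) is rich enough that every term in $\psi$ acquires a uniquely determined simplification. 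The secondary but genuine point is the bookkeeping of the variable-by-variable realisation: keeping the dependency structure of the reduced system acyclic, propagating the matching of cuts and residues, and establishing the congruence-refined form of \Cref{crl-main-obstacle}. Everything outside these two points is either symbol-pushing or a direct appeal to \Cref{crl-main-obstacle}, \Cref{lma-Main}, \Cref{lma-about-function-F} and the model-completeness of Presburger arithmetic.
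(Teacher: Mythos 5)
Your plan is, at its core, the same as the paper's: reduce every quantifier-free constraint to the shape $a<x<b\wedge\fp{c}<\fp{x}<\fp{d}$ and discharge it via \Cref{crl-main-obstacle}, which in turn rests on \Cref{lma-Main}; the Robinson's-test framing you use is a standard equivalent of the saturated back-and-forth the paper runs, so this is not a genuinely different route. What the paper actually spends its effort on, however, is precisely the two points you explicitly set aside as the ``principal difficulty'' and the ``secondary but genuine point''. The paper does not solve the existential system variable-by-variable by appealing to an unstated ``congruence-refined, configuration-preserving'' version of \Cref{crl-main-obstacle}; instead, once a single new element $x\in M_1$ is matched to $y\in M_2$ with the same numerico-decimal cut over $M$, the proof closes $M\langle x\rangle$ and $M\langle y\rangle$ under the extrema posited by \Cref{ax-min-decimal-greater-than-phi-c} and proves directly, by the case analysis (Cases 1, 1.1, 1.2, 2, 2.1, 2.2, 2.2.1, 2.2.2), that $\min^{M_1}_{(a+t_1(x),\,b+t_2(x))}$ and $\min^{M_2}_{(a+t_1(y),\,b+t_2(y))}$ have the same quantifier-free $\mathcal L_{\fib}(M)$-type. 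That case analysis --- distinguishing whether some $z\in(a,b)$ has $\fp{z}>1-\fp{t(x)}$, whether $z_1$ lies within distance $a'\in M$ of $t(x)$, and the infinitely-large case handled through the behaviour of $F$ and $G$ on $t(y)$ --- is the substance of the theorem, and nothing in your proposal substitutes for it.

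Concretely, the gap in your argument is at the step ``chosen with the matching cuts and residues so that every later constraint mentioning $y_i$ survives''. \Cref{crl-main-obstacle} produces a \emph{specific} element (essentially $c+\min_{(a-c,b-c)}$, the element of least decimal part in the window), not an element with a prescribed cut over $M_1$; there is no reason the element it hands you agrees with $d_i$ on the later constraints of $\psi$, and you never show that the joint type of the full tuple $\bar d$ over $M_1$ is realised, only that each single-variable projection is. Similarly, your normal-form step for $\fibfloor{-}$ only has \Cref{lma-about-function-F} available, which is stated for $F(m+n)$ with $m,n\geq 0$; the reduction of $\fibfloor{m y_i + a}$ (with $m$ possibly negative or $>1$, or with $f(y_i)$ inside) to that lemma within a fixed configuration is asserted but not carried out, and it is exactly the kind of computation the paper's use of \Cref{ax-Fib-Functions} and \Cref{ax-F} is designed to make tractable. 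In short, you have correctly located the two hard points, but a proof must resolve them rather than name them; the paper's case analysis is where that resolution lives, and your outline does not reproduce or replace it.
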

\begin{proof}
	We need to show that 
	$\mathrm{Diag}(M)\cup T$ is a complete $\mathcal{L}_{\fib}(M)$-theory 
	for each 
	$M\models T$. Suppose that $ M $ is a model 
	of $ T $ with $ |M|<\kappa $ for some $\kappa$, and $M_1 $ and $M_2$ are arbitrary $ \kappa $-saturated
	models of $ T $ with $ M\subseteq M_1 $ and $M\subseteq M_2$. 
	Instead of showing that $ M_1 $ and $ M_2 $ are elementarily equivalent, we prove the stronger
	statement that they are back-and-forth equivalent. 

	Suppose that $ x\in M_1 $, we show that every finite fragment of the quantifier-free type of $ x $ over $ M $ can be satisfied by an element $ y\in M_2 $. According to the properties of the orders involved, we only need to consider quantifier-free formulas of the form below:
	\begin{align*}
		\begin{cases}
			a<x<b\ &\wedge \ \fp{c}<\fp{x}<\fp{d}\ \wedge  \\
			a'<f(x)<b'\ &\wedge \ \fp{c'}<\fp{f(x)}<\fp{d'}\ \wedge  \\
			a''<F(x)<b''\ &\wedge \ \fp{c''}<\fp{F(x)}<\fp{d''}  \\
		\end{cases}
	\end{align*}
	where $a,a',a'', \cdots, d''$ are arbitrarily chosen elements of $M$. Note that according to Axioms (A1.2), (A1.4), \Cref{ax-Fib-Functions}, and \Cref{ax-F} we do not need to consider formulas involving terms of the form $f(F(x)), f(a+F(x)), f^2(x) $ or $F^2(x)$. Also, the numerico-decimal cut of $f(x)$ and $F(x)$ over $M$ is  determined by the numerico-decimal cut of $x$ over $M$. Hence we only need to consider formulas of the form
	 \[ 	a<x<b\ \wedge \ \fp{c}<\fp{x}<\fp{d}. \]
	
	Now, when $M_1$ contains an element satisfying the inequalities above, by \Cref{ax-min-decimal-greater-than-phi-c} there also exists an element satisfying $a<x<b$ and $\fp{c}<\fp{x}$, and having the least possible of corresponding decimal parts. This new element, say $x$, automatically satisfies $ \fp{x}<\fp{d}$ as well. But, \Cref{crl-main-obstacle} ensures that $x$ already belongs to $M$. 
	
	In case that $x$ is infinitely large with respect to elements in $M$, that is, $x$ is numerically greater than every element $a\in M$, finding a corresponding element $y\in M_2$ that satisfies the decimal cut of $x$ over $M$ is always possible due to Kronecker's approximation lemma (\Cref{crl-Kronecker}) and saturation of $M_2$. The same fact remains true for negative elements which are infinitely small since Axiom (A1.3), or less formally, the relation $\fp{(-x)}=1-\fp{x}$ determines the decimal cut of $\fp{(-x)}$ over $M$.

Having $ M\langle x\rangle\cong M\langle y\rangle $, we need to take one more step by extending the substructures $ M\langle x\rangle \subseteq M_1$ and $ M\langle y\rangle \subseteq M_2 $ into models of $ T $, and then by showing that the new extended substructures are  isomorphic too. For this, it suffices to add all the minimum and maximum elements 
	claimed by \Cref{ax-min-decimal-greater-than-phi-c} to the structure $ M\langle x\rangle \subseteq M_1$; theses extrema are already available in $ M_1 $ as a model of $ T $. We do the same for $M\langle y\rangle$ in $M_2$.
	
	To show that the new extended substructures are isomorphic, suppose that $a, b\in M$, and that $t_1(x)$ and $t_2(x)$ are two $\mathcal{L}_{\fib}$-terms. We prove that the elements $\min_{(a+t_1(x),b+t_2(x))}\in M_1$ and $\min_{(a+t_1(y),b+t_2(y))}\in M_2$ satisfy the same 
	set of quantifier-free $\mathcal{L}_{\fib}(M)$-formulas.
	
	
	\vspace{.5cm}
	\textbf{Case 1.} Suppose that $ t_1(x)=t_2(x)=t(x) $ for some term $t(x)$. First, we prove the following claim:
	
	\textbf{Claim.} The existence of an element $ u\in(a,b)^{M_1} $ with $ \fp{u}>1-\fp{t(x)} $ implies the existence of an element $ v\in (a,b)^{M_2} $ such that $ \fp{v}>1-\fp{t(y)} $. 
	
	\textit{proof of the claim.} As $u\in(a,b)^{M_1}$ and $a,b\in M$, there are arbitrarily large Fibonacci elements in $M$ with a decimal part greater than $\fp{u}$. Hence, we can easily find an element $ d\in M $ such that $ \fp{u}<\fp{d} $. As a part of our argument, we need to repeat the proof of \Cref{crl-main-obstacle}.
	By \Cref{ax-min-decimal-greater-than-phi-c}, there exists an element $ v\in (a,b)^{M_1} $ such that it satisfies $ \fp{v}<\fp{d} $, and moreover it has the largest decimal part
	among all elements having the same properties. By \Cref{lma-min-minus}, the element $ v $ is equal to $ d+\max_{(a-d,b-d)}^{M_1} $, and the latter turns out to be equal to $ d+\max_{(a-d,b-d)}^{M} $; this, 
	by \Cref{lma-Main}. Hence, $ v $ belongs to $ M $. On the other hand, it is obvious that $ \fp{v}\geq\fp{u}>1-\fp{t(x)} $. As $ y $ and $ x $ are assumed to satisfy the same quantifier-free formulas over $ M $, 
	we conclude that $ \fp{v}>1-\fp{t(y)} $ in $ M_2 $.
	\hfill $ \blacksquare_{\text{ Claim}} $

	Now, there are two possible cases for $ \min_{(a+t(x),b+t(x))}^{M_1} $:
	
	\vspace{.5cm}
	\textbf{Case 1.1.} In the case that there does not exist any element $ z\in(a,b)^{M_1} $ with $ \fp{z}>1-\fp{t(x)} $, all the elements $ z\in(a,b)^{M_1} $ have the property that $ \fp{(z+t(x))}=\fp{z}+\fp{t(x)} $. This implies that for any $ w\in(a+t(x), b+t(x)) $ we have that $ \fp{w}>\fp{a} $. Hence, the minimum $ \min_{(a+t(x),b+t(x))}^{M_1} $ is equal to $ t(x)+z_0 $ where $ z_0=\min_{(a,b)}^{M_1} $, and the latter is, by \Cref{lma-Main}, equal to $ \min_{(a,b)}^M $. In addition, $ M_2 $ cannot contain an element  $ z'\in(a,b)^{M_2} $ with $ \fp{z'}>1-\fp{t(y)} $ either. Otherwise, we can use--by way of symmetry--the claim above to show that $ M_1 $ would also contain an element with the same property, which contradicts the hypothesis of this case. Therefore, we can use the same argument to show that $ \min_{(a+t(y),b+t(y))}^{M_2}=t(y)+\min_{(a,b)}^{M_2} $, which, by \Cref{lma-Main}, is equal to $  t(y)+\min_{(a,b)}^{M} $. This, using the fact that $ M\langle x\rangle\cong M\langle y\rangle $, shows that the $ \mathcal{L}_{\fib}(M) $-quantifier-free type of $ \min_{(a+t(x),b+t(x))}^{M_1} $ in $ M_1 $ is the same as that of $ \min_{(a+t(x),b+t(x))}^{M_2} $ in $ M_2 $.
	
	\vspace{.5cm}
	\textbf{Case 1.2.} The second case occurs when there is an element $ z\in(a,b)^{M_1} $ with $ \fp{z}>1-\fp{t(x)} $. Then, by \Cref{ax-min-decimal-greater-than-phi-c}, there exists such an element with the minimum possible of corresponding decimal parts; let $ z_1 $ denote this element. Also, let $ \pi_0(z) \subseteq \tp^{M_1}(z_1/M) $ be the subset consisting of the formulas of the form
	\[ \big\{a'<z<b' \ \wedge \ \fp{c'}<\fp{z}<\fp{d'}\big\}, \]
	for all $ a',b',c',d'\in M $. Note that any finite fragment of the partial type $\pi_0(z)$ reduces to a formula of the form $ a'<z<b' \ \wedge \ \fp{c'}<\fp{z}<\fp{d'} $; let $\theta(z; a',b',c',d')$ denote such a formula.
	
	Let $\pi(z)$ be the union of $\pi_0(z)$ with the the following set of formulas:
	\begin{align*}
		&\Big\{1-\fp{t(y)}<\fp{z}\Big\}\cup\\
		&\Big\{\forall w\Big(\big(\theta(w; a',b',c',d') \wedge \fp{w}>1-\fp{t(y)}\big) \ \rightarrow \ \big(w=z \vee \fp{z}<\fp{w}\big)\Big)\Big\},
	\end{align*}
	where $ \theta(w; a',b',c',d') $ ranges over all formulas in $ \pi_0(z) $.
	
	Now, each $\theta(z; a',b',c',d')$ is satisfied by $z_1$ in $M_1$. Hence, \Cref{ax-min-decimal-greater-than-phi-c} insures the existence of an element $w_0\in M_1$ that satisfies $\theta(z; a',b',c',d')$ and has the maximum possible of decimal parts among all solutions of $ \theta(z; a',b',c',d') $. By \Cref{crl-main-obstacle}, this element, say $ z_0 $, belongs to $ M $. As $\fp{z_1}<\fp{d'}$, we also have $ \fp{z_1}\leq\fp{z_0} $ which implies that $ 1-\fp{t(x)}<\fp{z_0} $. Based on the fact that $ M\langle x\rangle \cong M\langle y\rangle$, the formula $ 1-\fp{t(y)}<\fp{z_0} $ holds in $M_2$ also. 
	
	On the other hand, by \Cref{ax-min-decimal-greater-than-phi-c}, there exists an element, say $z_2\in M_2$ satisfying 
	\[  \theta(z; a',b',c',d')\ \wedge \ 1-\fp{t(y)}<\fp{z} \]
	and with the minimum possible of decimal parts among all solutions of the formula above in $M_2$. 
	
	Hence, any finite fragment of $ \pi(z) $ is satisfied in $ M_2 $. Therefore, we can find a realization for $ \pi(z) $ in $ M_2 $. This shows that isomorphism between $M\langle x\rangle$ and $M\langle y\rangle$ extends to an isomorphism between $\langle M, x,z_1\rangle ^{M_1}$ and $\langle M, y,z_2\rangle ^{M_2}$. 
	
	\vspace{.5cm}
	\textbf{Case 2.} $t_1(x)\neq t_2(x)$. Without loss of generality we may assume that $t_1(x)<t_2(x)$.
	
	Note that $\min^{M_1}_{(a+t_1(x),b+t_2(x))}$ is either equal to $ \min^{M_1}_{(a+t_1(x), b+t_1(x))} $ or equal to $\min^{M_1}_{(b+t_1(x),b+t_2(x))} $. The first case is exactly the same as Case 1 above. Hence, suppose that $\min^{M_1}_{(a+t_1(x),b+t_2(x))} = \min^{M_1}_{(b+t_1(x),b+t_2(x))}$. We show that $\min^{M_1}_{(b+t_1(x),b+t_2(x))}$ and $\min^{M_2}_{(b+t_1(y),b+t_2(y))}$ satisfy the same set of quantifier-free formulas over $M$.
		
	Let $t(x)$ denote $t_2(x)-t_1(x)$. 
	
	\vspace{.5cm}
	\textbf{Case 2.1.} If there is no element $z\in(0,t(x))$ with $\fp{z}>1-\fp{b}$, we have $\fp{(b+z)}=\fp{b}+\fp{z}$ for all elements $z\in(0,t(x))$. Hence, $\min^{M_1}_{(b+t_1(x),b+t_2(x))}$ is equal to $b+F(t(x))$ as $F(t(x))$ has the minimum of corresponding decimal parts among all elements in $(0,t(x))$. But, a similar argument as above shows that $M_2$ cannot contain such an element either. That is, $\min^{M_2}_{(b+t_1(y),b+t_2(y))}=b+F(t(y))$ which belongs to $M\langle y\rangle $. By our assumption of $M\langle x\rangle\cong M\langle y\rangle$, this implies that $\min^{M_1}_{(b+t_1(x),b+t_2(x))}$ and $\min^{M_2}_{(b+t_1(y),b+t_2(y))}$ satisfy the same set of quantifier-free formulas over $M$.
	
	\vspace{.5cm}
	\textbf{Case 2.2.} For the case that there exists an element $z\in(0,t(x))$ with $\fp{z}>1-\fp{b}$, note that, by \Cref{ax-min-decimal-greater-than-phi-c}, there also exists an element $z_1 \in(0,t(x))$ in $M_1$ satisfying $\fp{z_1}>1-\fp{b}$ and with the minimum corresponding decimal part among all elements like $z$. Note that, we have $\fp{(b+z_1)}=\fp{b}+\fp{z_1}-1$, and hence $\min^{M_1}_{(b+t_1(x),b+t_2(x))}$ would be equal to $b+z_1$. 
	
	\vspace{.5cm}
	\textbf{Case 2.2.1} Suppose that there exists a positive element $a'\in M$ with $t(x)-a'<z_1<t(x)$. Then, there is an element $z'\in(0,a')^{M_1}$ such that $z_1=t(x)-z'$. Recall that $z_1\in M_1$ is the element having the least decimal part among all elements satisfying 
	\begin{align}\label{eq-z-1}
			 1-\fp{b}<\fp{z_1}\ \wedge \ 0<z_1<t(x).
	\end{align}

	There are two possible ways that $\fp{z_1}$ and $\fp{z'}$ can interrelate:
	\begin{align*}
		\begin{cases}
			\text{(ii)} \hspace*{7pt}\fp{z_1}=\fp{t(x)}-\fp{z'} &\text{ when } \fp{z'}<\fp{t(x)}, \text{ or }\\
			\text{(i)} \hspace*{3pt} \fp{z_1}=\fp{t(x)}-\fp{z'}+1 &\text{ when } \fp{z'}>\fp{t(x)}.
		\end{cases}
	\end{align*}
	Either way, in order for $z_1$ to have the least decimal part among all elements having the properties \eqref{eq-z-1} above, the element $z'$ must have the greatest decimal part among all 
	elements satisfying 
	\begin{align}\label{eq-z-prime}
		 0<z'<a' \ \wedge\  z_1=t(x)-z' \ \wedge\ 1-\fp{b}<\fp{(t(x)-z')}.
	\end{align}
	 In case (i), conditions \eqref{eq-z-1} and \eqref{eq-z-prime} translate into having $z'\in M_1$ with the greatest possible decimal part and satisfying 
	 \[ 0<z'<a'\ \wedge \ \fp{t(x)}<\fp{z'}<\fp{t(x)}-\fp{b}, \]
	 where $\fp{t(x)}-\fp{b}$ has no other choice than being positive. That is, it equals $\fp{(t(x)-b)}$, and therefore, case (i) reduces to having the element $ z'\in M_1$ that has the greatest decimal part among all elements satisfying $ 0<z'<a'$ and $ \fp{z'}<\fp{(t(x)-b)}$. Here, note that the very existence of $z'$, guaranteed by the existence of $z_1$, ensures that the condition $ \fp{t(x)}<\fp{z'}$ is already satisfied as $z'$ has the greatest possible of decimal parts. On the other hand, $t(x)-b$ belongs to $M\langle x\rangle$, meaning that, it is of the form $t'(x)$ for some $\mathcal{L}_{\fib}$-term $t'(x)$. Hence, this case can be handled similar to Case 1. 
	 
	 As for case (ii), the conditions \eqref{eq-z-1} and \eqref{eq-z-prime} translate into having $z'\in M_1$ satisfying 
	 \[ 0<z'<a'\ \wedge \ \fp{z'}<\fp{t(x)}\ \wedge \ \fp{z'}<\fp{t(x)}+\fp{b}-1, \]
	 and having the greatest possible decimal part. As $\fp{z'}$ is positive, the last inequality urges that $\fp{t(x)}+\fp{b}$ be greater than one, or equivalently, that $\fp{(t(x)+b)}=\fp{t(x)}+\fp{b}-1$. ‌Again, $t(x)+b$ belongs to $M\langle x\rangle$. Now, whether $\fp{t(x)}$ is greater or less than $\fp{(t(x)-b)}$ remains unchanged for $t(y)$ and $b$ in $M_2$, and therefore, we can use a similar argument as used in Case 1 to find the desired element in $M_2$. 
	
	\vspace{.5cm}
	\textbf{Case 2.2.2} There does not exist any positive element $a'\in M$ with $t(x)-a'<z_1<t(x)$. Then, we have $a'<z_1+a'<t(x)$ for all $a'\in M$, that is, $t(x)$ is infinitely large with respect to the elements in $M$, and the same remains true for $t(y)$ in $M_2$. 
	
	Now, one can see that the element $F(t(y))\in M_2$ is also greater than any element in $M$. Otherwise, there is an element $a'\in M$ such that $F(t(y))<a'<t(y)$. Recall that we already have $F(t(y))<t(y)<F(t(y))+f(F(t(y)))$ where the latter element is the first Fibonacci element greater than $t(y)$ and of an even index. As $f$ is an increasing function, we will have that $F(t(y))+f(F(t(y)))< a'+f(a')$ implying that $t(y)<a'+f(a')$. But the latter element belongs to $M$ contradiction our hypothesis. 

    Note that $G(t(y))=f(F(t(y)))$ which enables us to similarly argue that $G(t(y))$ is also greater than any element in $M$. In particular, we have $G(b)<G(t(y))$. 
    Note that we have $1-\fp{b}<1-\fp{F(b)}<\fp{G(b)}$, and 
    this is true for any element in any model of $T$ by the way. In addition, $G(b)<G(t(y))$ implies that $\fp{G(b)}<\fp{G(t(y))}$. That is, for $G(t(y))\in M_2$‌ we have that 
    \[ 0<G(t(y))<t(y) \text{ and } 1-\fp{b}<\fp{G(t(y))}\]
    which enables us to find an element in $M_2$ having the properties above and with the least possible decimal part. Hence, we can proceed as in Case 1.2 above.
    
    The extrema addressed in \Cref{ax-min-decimal-greater-than-phi-c} can be handled using similar arguments.

\end{proof}
\begin{cor}
	$T$ is decidable and axiomatizes the structure ${\zphi=\ZphiF}$. 
\end{cor}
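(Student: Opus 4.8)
The plan is to read off both assertions from \Cref{thm-model-completeness}, once two routine facts are in place: that the standard structure $\ZphiF$ is itself a model of $T$, and that it embeds into every model of $T$. For the first fact I would simply collect the verifications that were made along the way. \Cref{axiom-properties-of-f} holds in the standard model by \cite{kz}; \Cref{ax-min-max} and \Cref{ax-min-decimal-greater-than-phi-c} hold because a bounded numerical interval contains only finitely many integers, so every bounded definable set of integers attains its extrema; the fact that the functions $F$ and $G$ of \Cref{dfn-F-G} coincide with the Fibonacci floor functions is exactly \Cref{lma-fib-min-max}; \Cref{ax-Fib-Functions} is \Cref{lma-FibFunctions-Properties}; and \Cref{ax-F} is \Cref{lma-about-function-F}. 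Hence $\ZphiF\models T$.

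Next I would show that $\ZphiF$ is a prime model of $T$, i.e.\ that it embeds into every $M\models T$. Since $M$ models Presburger arithmetic, it contains a unique substructure $N$ isomorphic to $\langle\ZZ,<,+,0\rangle$, its standard part. On $N$ the axioms pin down $f$ and $F$ uniquely: the basic axioms (A1.1)$'$–(A1.5) determine the value of $f$ on each standard integer (as analysed in \cite{kz}), so $N$ is closed under $f$ and $f\restriction N$ is the standard $f$; and the base values together with the recursion of \Cref{ax-F} (supplemented by \Cref{ax-Fib-Functions}) determine $F$ on each standard integer, so $N$ is closed under $F=\fibfloor{-}$ and $F\restriction N$ is the standard Fibonacci floor. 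Thus $N$ with its induced structure is a copy of $\ZphiF$ sitting inside $M$. By \Cref{thm-model-completeness} this embedding of models of $T$ is elementary, so $M\equiv\ZphiF$; as $M$ was arbitrary, $T$ is complete, and since $\ZphiF\models T$ we conclude $T=\theory(\ZphiF)$. The only point here that is genuinely substantive rather than bookkeeping is this prime-model claim — namely, checking that the axioms really force $f$ and $F$ to take their intended values on the standard integers of an arbitrary model, which is where one must invoke the analysis of $f$ from \cite{kz} together with the recursive description of $F$ provided by \Cref{lma-fib-min-max}, \Cref{lma-FibFunctions-Properties}, and \Cref{lma-about-function-F}.

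Finally, for decidability I would note that $T$ is recursively axiomatized: (A1.1)$'$ is the usual recursive axiomatization of Presburger arithmetic, and (A1.2)–(A1.5), \Cref{ax-min-max}, \Cref{ax-Fib-Functions}, \Cref{ax-min-decimal-greater-than-phi-c}, and \Cref{ax-F} form a fixed finite list of $\mathcal{L}_{\fib}$-sentences. A complete, recursively axiomatized first-order theory is decidable: enumerate the proofs from $T$, and for any sentence $\sigma$ exactly one of $\sigma,\neg\sigma$ eventually appears. Hence $T$ is decidable. Since $\fibfloor{-}$ is $\varnothing$-definable in $\zphi$ by \Cref{crl-fib-definalbe}, the theories $\theory(\zphi)$ and $\theory(\ZphiF)$ are recursive in each other, so decidability transfers to $\theory(\zphi)$ as well, which is the model-theoretic proof of decidability promised in the introduction.
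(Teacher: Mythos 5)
Your proposal is correct and follows essentially the same route as the paper, which simply observes that $T$ is complete because it is model-complete (by \Cref{thm-model-completeness}) and has $\ZphiF$ as a prime model, and that a recursively axiomatized complete theory is decidable. You have merely unpacked the primeness claim (standard part of any model is closed under $f$ and $F$ and the axioms pin down their values there) and added the standard observation about definitional expansions to transfer decidability back to $\theory(\zphi)$.
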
			
\begin{proof}
	$T $ is complete, because
	it is model-complete and has  $\ZphiF$ as its prime model. The axiomatization is clearly recursive, and hence the theory is decidable. 
\end{proof}	


\section{Final remarks}

\subsection{An interrelated structure} The plot of the function
$[\cdot]:\mathbb{N}\to \mathbb{R}$, with 
the rule
$x\mapsto [\varphi x]$ can be depicted using the black points appearing in the following diagram (for $n\leq 60$):
\begin{figure}[h]
	\centering
	\includegraphics[width=13cm]{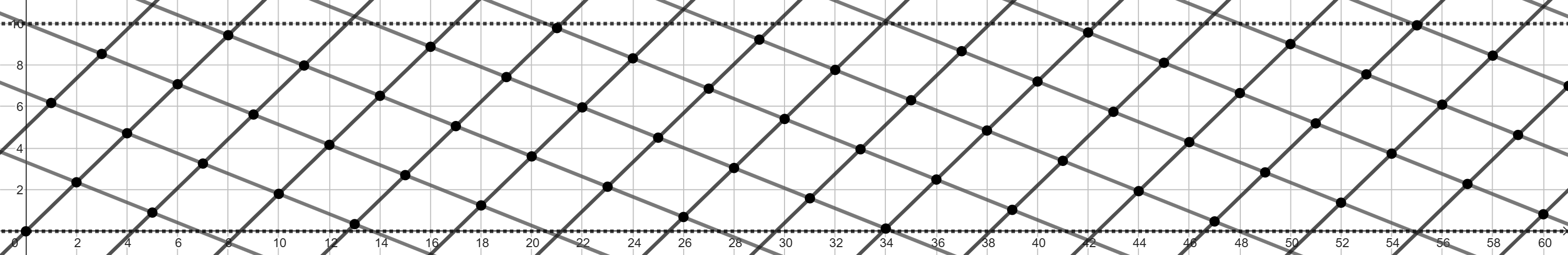}
\end{figure} 

As can be seen above, the black nodes are the intersections of equidistant parallel lines with positive and negative (calculable) slope, and contains seemingly simple patterns. 
Let us mean by the \textit{diamond structure} the 
structure
obtained by considering the points of intersection 
of such line segments vertically confined in the interval $(0,1)$. 
One can ask the following seemingly simple question:
\begin{quote}
	Axiomatize the diamond structure in a suitable language and prove that this structure is decidable.
\end{quote}
As the relation $[\varphi x]<[\varphi y]$ is definable in our theory, the diamond diagram is
hidden in our models, and the decidability of its structure follows our theorem. 
\par 
However, and quite interestingly, the diamond diagram will look much alike when drawn for the function
$[ ex]$ for $e$ denoting the Euler number. To be more precise, in this case the slope of the mentioned lines would be equal to $e$ or $-e$
instead of $\varphi$ and $-\varphi$. In other words, the only thing that changes is the number of intersecting nodes 
lying on the line segments. 

Having a better understanding of these diamond structures may provide reliable information on
the complexity of the numerical structure obtained by the function $\lfloor ex\rfloor$. As we guess, this can provide a surprising
shortcut probably to obtain a similar decidability result for the structure  $\langle \mathbbm{Z},<, +,f_e,0\rangle$ where $f_e(x)=\lfloor ex\rfloor$ for each integer $x$. See also the Section ``A connection to o-minimality'' in \cite{kvz}.

\subsection{An Algorithm}\label{subsec-algorithm}
The proof of the proposition below shows that the smallest corresponding decimal part among all integers between two given integers $a$ and $b$ can be found algorithmically. Similar algorithm can be applied to find the desired minimum for negative integers as well. One may expect the same algorithm to work also in non-standard models.
However, this is not necessarily the case mainly due to the fact that there does not exist the least non-standard Fibonacci number, and as a consequence, not all non-standard elements admit a finite Fibonacci representation. 

%
\begin{prop}\label{prop-algorithm}
	Let $a$ and $b$ be two integers with $0<a<b$. The element $c$ with $a<c<b $ and $\fp{c} = \min \big\{\fp{t}:a<t<b\big\} $ can be obtained 
	using the following procedure: 
	Let $n=0$, $a_0=a$ and $b_0=b$.
	\begin{enumerate}
		\item 
		If $F(b_n)\leq a_n$, it is implied that $F({b_n})=F({a_n})$. Put $a_{n+1}=a-F({a_n})$ and
		$b_{n+1}=b-F({b_n})$.
		
		\item If 
		$F(b_n)>a_n$, let $c=F({b_n})+\displaystyle \sum_{i=0}^{n-1} F({a_i})$ and halt.
		
		\item 
		Replace $n$ with $n+1$ and go to step (1).
	\end{enumerate}
\end{prop}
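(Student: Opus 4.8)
The plan is to treat the two branches of the procedure separately---step~(1) as a legitimate reduction and step~(2) as the correct output---and to patch them together by an induction on the number of iterations. Everything takes place in the standard model $\zphi$, so I may use freely that $F$ is monotone non-decreasing and that, by \Cref{lma-fib-min-max}, the even-index Fibonacci numbers are exactly the points realising $\min\{\fp{n}:0<n\le x\}$; the workhorse will be the ``no-carry'' identity extracted from the proof of that lemma, namely $\fp{(F_{2k}+x)}=\fp{F_{2k}}+\fp{x}$ for every integer $x$ with $0<x<f(F_{2k})=F_{2k+1}$. As a preliminary I would settle the bookkeeping: when $F(b_n)\le a_n$ one indeed has $F(a_n)=F(b_n)$ (monotonicity gives $F(a_n)\le F(b_n)$, and $F(b_n)$ is an even-index Fibonacci number $\le a_n$, hence $\le F(a_n)$ by maximality of the latter), so $b_{n+1}-a_{n+1}=b_n-a_n$ and the difference $b_n-a_n=b-a$ is an invariant of the procedure; moreover $a_{n+1}=a_n-F(a_n)\ge 0$ and $b_{n+1}<b_n$, so $(b_n)$ strictly decreases in $\NN$ and the procedure halts after finitely many steps (at the latest once some $a_n=0$, which forces $F(b_n)\ge 1>a_n$ and hence step~(2)).

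For the reduction, assume $F(b_n)\le a_n$ and set $F_{2k}:=F(b_n)=F(a_n)$. By \Cref{lma-FibFunctions-Properties}(1) the first even-index Fibonacci number strictly above $b_n$ is $\bar{f}(F_{2k})=F_{2k}+F_{2k+1}=F_{2k+2}$, so $F_{2k}\le a_n<b_n<F_{2k}+F_{2k+1}$; hence for every integer $t\in(a_n,b_n)$ the shift $t-F_{2k}$ lies in $(0,F_{2k+1})$ and the no-carry identity yields $\fp{t}=\fp{F_{2k}}+\fp{(t-F_{2k})}$. Since $\fp{F_{2k}}$ is independent of $t$ and $t\mapsto t-F_{2k}$ is a bijection from $(a_n,b_n)\cap\ZZ$ onto $(a_{n+1},b_{n+1})\cap\ZZ$, an element $c$ realises $\min\{\fp{t}:a_n<t<b_n\}$ if and only if $c-F_{2k}$ realises $\min\{\fp{t}:a_{n+1}<t<b_{n+1}\}$; this justifies step~(1), the subtracted term being accumulated in $\sum_i F(a_i)$.

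For the output, assume $F(b_n)>a_n$ and set $F_{2m}:=F(b_n)$, so that $a_n<F_{2m}\le b_n$ and, as above, $b_n<F_{2m}+F_{2m+1}$. By \Cref{lma-fib-min-max} we have $\fp{F_{2m}}\le\fp{t}$ for all integers $a_n<t\le F_{2m}$, while for $F_{2m}<t<b_n$ the no-carry identity gives $\fp{t}=\fp{F_{2m}}+\fp{(t-F_{2m})}>\fp{F_{2m}}$; hence $F_{2m}=F(b_n)$ realises $\min\{\fp{t}:a_n<t<b_n\}$. A routine induction on the number of step~(1) iterations then shows that the minimiser on $(a,b)$ is the minimiser on $(a_n,b_n)$ translated by $\sum_{i=0}^{n-1}F(a_i)$, and combining this with the previous sentence produces exactly $c=F(b_n)+\sum_{i=0}^{n-1}F(a_i)$, as claimed.

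The delicate point I anticipate is the handling of the borderline configurations rather than the main line of argument: in particular $a_n=0$ (harmless, but it must be folded into step~(2)) and, more seriously, the case in which $b_n$ is itself an even-index Fibonacci number---then $F(b_n)$ is the excluded right endpoint of $(a_n,b_n)$ and locating the interior minimiser requires a further reduction by the preceding even-index Fibonacci number (or a check that the earlier steps preclude this configuration). One also has to make precise why the repeated subtraction of leading even-index Fibonacci numbers in step~(1) necessarily terminates with $F(b_n)>a_n$ and why the accumulator $\sum_i F(a_i)$ records the translation exactly at each stage. The genuine mathematical content, however, is entirely contained in \Cref{lma-fib-min-max} and \Cref{lma-FibFunctions-Properties}; the remainder is careful bookkeeping.
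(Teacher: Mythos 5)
Your approach is the same as the paper's: translate the interval $(a_n,b_n)$ by the leading even-index Fibonacci, using the no-carry identity extracted from the proof of \Cref{lma-fib-min-max}, identify $F(b_n)$ as the base-case minimiser, and stitch the steps together by induction. Your write-up is in fact more careful than the paper's, which contains two evident slips: the sentence ``If $F(b)>F(a)$, it is sufficient to show\dots'' is clearly meant to begin with $F(b)=F(a)$ (as the very next line says ``Since $F(b)=F(a)$''), and step~(1) of the algorithm surely intends $a_{n+1}=a_n-F(a_n)$, $b_{n+1}=b_n-F(b_n)$ rather than subtracting from the original $a$ and $b$---without that correction the accumulator $\sum_{i<n}F(a_i)$ does not telescope to the translation $a-a_n=b-b_n$ that makes the output formula sensible. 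You read the algorithm in the corrected form, which is the right call.

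The boundary concern you flag is genuine, and the paper's proof does not address it. If some $b_n$ is itself an even-index Fibonacci number, then $F(b_n)=b_n$ and the output $c=F(b_n)+\sum_{i<n}F(a_i)=b_n+(b-b_n)=b$, which lies outside the open interval $(a,b)$. This already fails at $n=0$: take $a=3$, $b=5$. Then $F(5)=5>3=a$, so the algorithm halts immediately with $c=5$, but the only integer in $(3,5)$ is $4$, which is the true minimiser. The earlier steps do \emph{not} preclude this configuration, so the guard is necessary; the cure is to consult $F(b_n-1)$ rather than $F(b_n)$ in the halting test and output (equivalently, step down to the preceding even-index Fibonacci whenever $F(b_n)=b_n$). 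With that amendment, and with the $a_n=0$ case folded into step~(2) as you note, your inductive argument goes through. So the ``delicate point'' you anticipated is in fact a flaw in the proposition as stated and in the paper's own proof; it is good that you identified it rather than glossing over it.
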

\begin{proof}
	First, note that the sequence $\{\fp{F_{2n}}\}_{n\in \NN}$ is a strictly decreasing sequence. Hence, if 
	$F(b)>F(a)$, by Lemma \ref{lma-fib-min-max}, the value of $\fp{F(b)}$ is the desired element with 
	the minimum of corresponding decimal parts. 
	\par 
	If $F(b)>F(a)$, it is sufficient to show that if $F(b-F(a))>a-F(a)$, then $\min \big\{\fp{t}:a<t<b\big\} =\fp{\big(F(a)+F(b-F(a))\big)}$.
	Since $F(b)=F(a)$, both $a$ and $b$ lie between two consecutive Fibonacci number with an even index, that is 
	$F(a)<a<b<f(F(a))+F(a)$, where $f(F(a))+F(a)$ is the first Fibonacci number greater than $F(a)$ and with an even index.
	Note that every $F(a)<x<f(F(a))+F(a)$, can be written as $F(a)+y$, for some $0<y<f(F(a))$.
	\par 
	On the other hand, by the proof of Lemma \ref{lma-fib-min-max}, 
	for all $0<y<f(F(a))$, we have $\fp{(F(a)+y)}=\fp{F(a)}+\fp{y}$. Hence if $F$ is the greatest Fibonacci
	number with an even index satisfying  $a<F(a)+F<b$, then $\fp{(F(a)+F)}$ attains the desired minimum. But, 
	$F=F(b-F(a))$, so $\min \big\{\fp{t}:a<t<b\big\} =\fp{\big(F(a)+F(b-F(a))\big)}$.
	
	This procedure ultimately halts as the obtained sequence of Fibonacci numbers of an even index decreasingly approaches zero. 
\end{proof}
%

\end{document}